\title{
Krull-Schmidt categories and projective covers}
\author[Henning Krause]{Henning Krause}
\address{Henning Krause\\ Fakult\"at f\"ur Mathematik\\
Universit\"at Bielefeld\\ 33501 Bielefeld\\ Germany.}
\email{hkrause@math.uni-bielefeld.de}
\thanks{Version from October 10, 2014.}
\theoremstyle{plain}
\newtheorem{lem}{Lemma}[section]
\newtheorem{prop}[lem]{Proposition}
\newtheorem{cor}[lem]{Corollary}
\newtheorem{thm}[lem]{Theorem}
\theoremstyle{remark}
\newtheorem{rem}[lem]{Remark}
\theoremstyle{definition}
\newtheorem{exm}[lem]{Example}
\numberwithin{equation}{section}
\newcommand{\smatrix}[1]{\left[\begin{smallmatrix}#1\end{smallmatrix}\right]}
\renewcommand{\mod}{\operatorname{mod}\nolimits}
\newcommand{\proj}{\operatorname{proj}\nolimits}
\newcommand{\rad}{\operatorname{rad}\nolimits}
\newcommand{\Rad}{\operatorname{Rad}\nolimits}
\newcommand{\add}{\operatorname{add}\nolimits}
\newcommand{\Mod}{\operatorname{Mod}\nolimits}
\newcommand{\End}{\operatorname{End}\nolimits}
\newcommand{\Hom}{\operatorname{Hom}\nolimits}
\renewcommand{\Im}{\operatorname{Im}\nolimits}
\newcommand{\Ker}{\operatorname{Ker}\nolimits}
\newcommand{\Coker}{\operatorname{Coker}\nolimits}
\newcommand{\id}{\operatorname{id}\nolimits}
\newcommand{\op}{\mathrm{op}}
\newcommand{\lto}[1][{}]{\stackrel{#1}{\longrightarrow}} 
\renewcommand{\to}[1][{}]{\stackrel{#1}{\rightarrow}} 
\newcommand{\xto}{\xrightarrow}
\def\a{\alpha}
\def\b{\beta}
\def\e{\varepsilon}
\def\p{\phi}
\def\r{\rho}
\def\s{\sigma}
\def\t{\tau}
\def\Ga{\Gamma}
\def\La{\Lambda}
\def\A{{\mathcal A}}
\def\C{{\mathcal C}}
\def\V{{\mathcal V}}
\def\bbZ{\mathbb Z}
\def\frI{\mathfrak I}
\def\fra{\mathfrak a}
\def\frm{\mathfrak m}
\begin{document}

\begin{abstract}
  Krull-Schmidt categories are additive categories such that each
  object decomposes into a finite direct sum of indecomposable objects
  having local endomorphism rings. We provide a self-contained
  introduction which is based on the concept of a projective cover.
\end{abstract}

\maketitle 

\setcounter{tocdepth}{1}
\tableofcontents

\section{Introduction}

Krull-Schmidt categories are ubiquitous in algebra and geometry; they
are additive categories such that each object decomposes into a finite
direct sum of indecomposable objects having local endomorphism
rings. Such  decompositions are essentially unique. Important examples
are categories of modules having finite composition length.

The aim of this note is to explain the concept of a Krull-Schmidt
category in terms of projective covers. For instance, the uniqeness of
direct sum decompositions in Krull-Schmidt categories follows from the
uniqueness of projective covers (Theorem~\ref{th:unique}).  The
exposition is basically self-contained. The results are somewhat
classical, but it seems hard to find the material in the literature.

The term `Krull-Schmidt category' refers to a result known as
`Krull-Remak-Schmidt theorem'. This formulates the existence and
uniqueness of the decomposition of a finite length module into
indecomposable ones \cite{Kr1925,Re1911,Sc1913}. Atiyah \cite{At1956}
established an analogue for coherent sheaves which is based on a chain
condition for objects of an abelian category (Theorem~\ref{th:bichain}).

The abstract concept of a Krull-Schmidt category can be found, for
example, in expositions of Auslander \cite{Au1971,Au1974} and
Gabriel-Roiter \cite{GR1997}. The basic idea is always to translate
properties of an additive category into properties of modules over
some appropriate endomorphism ring. Thus we see that an additive
category is a Krull-Schmidt category if and only if it has split
idempotents and the endomorphism ring of every object is semi-perfect
(Corollary~\ref{co:semi-perfect}). Essential ingredients of this
discussion are the radical of an additive category \cite{Ke1964} and
the concept of a projective cover \cite{Ba1960}.

\section{Additive categories and the radical}

\subsection*{Products and coproducts}
Let $\A$ be a category. A \emph{product} of a family $(X_i)_{i\in I}$
of objects of $\A$ is an object $X$ together with morphisms
$\pi_i\colon X\to X_i$ $(i\in I)$ such that for each object $A$ and
each family of morphisms $\p_i\colon A\to X_i$ $(i\in I)$ there exists
a unique morphism $\p\colon A\to X$ with $\p_i=\pi_i\p$ for all $i$.
The product solves a `universal problem' and is therefore unique up to
a unique isomorphism; it is denoted by $\prod_{i\in I}X_i$ and is
characterized by the fact that the $\pi_i$ induce a bijection
\[\Hom_\A(A,\prod_{i\in I}X_i)\xto{\sim}\prod_{i\in
I}\Hom_\A(A,X_i),\]
where the second product is taken in the category of sets.

The \emph{coproduct} $\coprod_{i\in I}X_i$ is the dual notion; it
comes with morphisms $\iota_i\colon X_i\to \coprod_{i\in I}X_i$ which
induce a bijection
\[\Hom_\A(\coprod_{i\in I}X_i,A)\xto{\sim}\prod_{i\in I}\Hom_\A(X_i,A).\]

\subsection*{Additive categories}
A category $\A$ is {\em additive} if
\begin{enumerate}
\item each morphism set $\Hom_\A(X,Y)$ is an abelian group and the
composition maps
\[\Hom_\A(Y,Z)\times\Hom_\A(X,Y)\lto\Hom_\A(X,Z)\] are bilinear,
\item there is a \emph{zero object} $0$, that is, 
$\Hom_\A(X,0)=0=\Hom_\A(0,X)$ for every object $X$, and
\item every pair of objects $X,Y$
admits a product $X\prod Y$.
\end{enumerate}

\subsection*{Direct sums}
Let $\A$ be an additive category. Given a finite number of objects
$X_1,\ldots,X_r$ of $\A$, a \emph{direct sum}
\[X=X_1\oplus\ldots\oplus X_r\] is by definition an object $X$ together with
morphisms $\iota_i\colon X_i\to X$ and $\pi_i\colon X\to X_i$ for
$1\leq i\leq r$ such that $\sum_{i=1}^r\iota_i\pi_i=\id_X$ and
$\pi_i\iota_i=\id_{X_i}$ for all $i$. 

\begin{lem}\label{le:sum}
The morphisms $\iota_i$ and $\pi_i$ induce isomorphisms
\[
\coprod_{i=1}^r X_i\cong\bigoplus_{i=1}^r X_i\cong \prod_{i=1}^r X_i. 
\]
\end{lem}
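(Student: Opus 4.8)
The plan is to show that the one object $X$ is at the same time a product and a coproduct of $X_1,\dots,X_r$, with the $\pi_i$ serving as product projections and the $\iota_i$ as coproduct injections. Once this is done, the three displayed isomorphisms are simply the canonical comparison morphisms: a product (respectively coproduct) is unique up to a unique isomorphism compatible with the structure maps, so the morphism $\coprod_i X_i\to X$ induced by the $\iota_i$ and the morphism $X\to\prod_i X_i$ induced by the $\pi_i$ are automatically isomorphisms. Throughout I would use the structure relations of a direct sum in the form $\pi_i\iota_j=\delta_{ij}\,\id_{X_j}$ (so in particular $\pi_i\iota_i=\id_{X_i}$) together with $\sum_{i=1}^r\iota_i\pi_i=\id_X$.

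First I would verify the product property. Following the characterization recalled above, it suffices to check that the map $\Hom_\A(A,X)\to\prod_{i=1}^r\Hom_\A(A,X_i)$ given by $\psi\mapsto(\pi_i\psi)_i$ is bijective for every object $A$. For surjectivity, given a family $(\varphi_i)_i$ I would set $\psi=\sum_i\iota_i\varphi_i$ and compute $\pi_j\psi=\sum_i(\pi_j\iota_i)\varphi_i=\varphi_j$; for injectivity, if all $\pi_i\psi=0$ then $\psi=(\sum_i\iota_i\pi_i)\psi=\sum_i\iota_i(\pi_i\psi)=0$. This makes $(X,\pi_i)$ a product. The coproduct property is the formally dual statement, obtained by reversing every arrow: the map $\psi\mapsto(\psi\iota_i)_i$ is surjective via $\psi=\sum_i\varphi_i\pi_i$ and injective via $\sum_i\iota_i\pi_i=\id_X$, so $(X,\iota_i)$ is a coproduct. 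Note that this simultaneously produces the coproduct (it is $X$), whereas the product already exists by the additive axioms and induction on $r$.

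The computations are short, so the main thing to watch is the division of labour between the two relations rather than any single hard step. The identity $\sum_i\iota_i\pi_i=\id_X$ is precisely what yields injectivity on each side, while the off-diagonal vanishing $\pi_i\iota_j=0$ for $i\neq j$ is exactly what makes the comparison maps surjective: it is what lets an arbitrary family $(\varphi_i)_i$ be realized by a single morphism. I would therefore regard the correct use of these orthogonality relations as the real content of the argument, the remaining passage from ``$X$ is both a product and a coproduct'' to the stated chain of isomorphisms being a formal consequence of the universal properties.
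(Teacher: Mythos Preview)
Your argument and the paper's are the same in substance: both establish the product property by showing that $\psi\mapsto(\pi_i\psi)_i$ gives a bijection $\Hom_\A(A,X)\to\prod_i\Hom_\A(A,X_i)$; the paper packages this as ``$\Hom_\A(A,-)$ carries the direct-sum data on $X$ to direct-sum data in $\Ab$, where finite sums and products coincide'', while you write out the inverse map explicitly. The coproduct half is handled dually in both treatments.

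One point to watch: you use $\pi_i\iota_j=\delta_{ij}\,\id$, but the paper's definition only demands $\pi_i\iota_i=\id_{X_i}$ and $\sum_i\iota_i\pi_i=\id_X$. For $r=2$ the off-diagonal vanishing does follow (compose the second relation with $\pi_1$ on the left and $\iota_2$ on the right), but for $r\ge 3$ it need not: in $\Ab$, take $X=\bbZ/6$, $X_1=X_2=\bbZ/2$, $X_3=\bbZ/6$, with $\iota_1=\iota_2\colon 1\mapsto 3$, $\pi_1=\pi_2$ reduction mod $2$, and $\iota_3=\pi_3=\id$; then both stated relations hold, yet $X\not\cong\prod_i X_i$. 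So the orthogonality really must be part of the definition (as in most references), and the paper's own proof tacitly relies on it as well when it asserts that the image in $\Ab$ is the usual direct sum. With $\pi_i\iota_j=\delta_{ij}\,\id$ granted, your argument is complete.
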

\begin{proof}
A morphism $X\to Y$ in $\A$ is an isomorphism if it induces for each
object $A$ an isomorphism $\Hom_\A(A,X)\to\Hom_\A(A,Y)$ of abelian
groups.  The functor $\Hom_\A(A,-)$ sends the direct sum
$\bigoplus_iX_i$ in $\A$ to a direct sum $\bigoplus_i\Hom_\A(A,X_i)$
of abelian groups. It is a standard fact that finite direct sums and
products of abelian groups are isomorphic.  Thus the following
composite is in fact an isomorphism.
\[\bigoplus_i\Hom_\A(A,X_i)\xto{\sim}\Hom_\A(A,\bigoplus_iX_i)\to
\Hom_\A(A,\prod_iX_i)\xto{\sim}\prod_i\Hom_\A(A,X_i).\] This
establishes the isomorphism $\bigoplus_i X_i\cong \prod_i X_i$ and the
other isomorphism $\coprod_i X_i\cong\bigoplus_i X_i$ follows by
symmetry.
\end{proof}

Lemma~\ref{le:sum} implies that a direct sum of
$X_1,\ldots,X_r$ is unique up to a unique isomorphism.  Thus one may
speak of \emph{the} direct sum and the notation $X_1\oplus\ldots\oplus
X_r$ is well-defined. We write $X^r=X\oplus\ldots\oplus X$ for the
direct sum of $r$ copies of an object $X$.

Let $X=X_1\oplus\ldots\oplus X_r$ and $Y=Y_1\oplus\ldots\oplus Y_s$ be
two direct sums.  Then one has
\[\Hom_\A(X,Y)=\bigoplus_{i,j}\Hom_\A(X_i,Y_j)\] and therefore each
morphism $\p\colon X\to Y$ can be written uniquely as a matrix
$\p=(\p_{ij})$ with entries $\p_{ij}=\pi_j\p\iota_i$ in
$\Hom_\A(X_i,Y_j)$ for all pairs $i,j$.

A non-zero object $X$ is \emph{indecomposable} if $X=X_1\oplus X_2$
implies $X_1=0$ or $X_2=0$.

An additive category has \emph{split idempotents} if every
idempotent endomorphism $\p=\p^2$ of an object $X$ splits, that is,
there exists a factorisation $X\xto{\pi}Y\xto{\iota}X$ of $\p$ with
$\pi\iota=\id_Y$.

Given an object $X$ in an additive category, we denote by $\add X$ the
full subcategory consisting of all finite direct sums of copies of $X$
and their direct summands. This is the smallest additive subcategory
which contains $X$ and is closed under taking direct summands.

\subsection*{Abelian categories}
An additive category $\A$ is \emph{abelian}, if every morphism
$\p\colon X\to Y$ has a kernel and a cokernel, and if the canonical
factorisation
\[\xymatrix{\Ker\p\ar[r]^-{\p'}&X\ar[r]^-\p\ar[d]&
Y\ar[r]^-{\p''}&\Coker\p\\ 
&\Coker\p'\ar[r]^-{\bar\p}&\Ker\p''\ar[u]}\]
of $\p$ induces  an isomorphism $\bar\p$.

\begin{exm} 
Let $\La$ be an associative ring.

(1) The category $\Mod\La$ of right $\La$-modules is an abelian
category.

(2) The category $\proj\La$ of finitely generated projective
$\La$-modules is an additive category. This category has split
idempotents and equals the subcategory $\add\La$ of $\Mod\La$ which is
given by $\La$ viewed as a $\La$-module.
\end{exm}

\subsection*{Projectivisation}
Every object of an additive category can be turned into a finitely
generated projective module over its endomorphism ring.

\begin{prop}\label{pr:proj}
Let $\A$ be an additive category and $X$ an object with
$\Gamma=\End_\A(X)$. The functor $\Hom_\A(X,-)\colon\A\to\Mod\Gamma$
induces a fully faithful functor $\add X\to\proj\Gamma$. This functor
is an equivalence if $\A$ has split idempotents.
\end{prop}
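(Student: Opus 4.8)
The plan is to write $F=\Hom_\A(X,-)$, regarded as a functor into $\Mod\Gamma$ via the right $\Gamma$-action on each group $\Hom_\A(X,Y)$ given by precomposition. First I would record the elementary bookkeeping: $F$ is additive, and $F(X)=\Hom_\A(X,X)=\Gamma$ is the free right module of rank one, so $F(X^n)\cong\Gamma^n$. Since an additive functor preserves direct summands, every object of $\add X$, being a summand of some $X^n$, is sent to a summand of $\Gamma^n$, that is, to an object of $\proj\Gamma$. Hence $F$ restricts to a functor $\add X\to\proj\Gamma$, and it remains to prove that this restriction is fully faithful, and dense once $\A$ has split idempotents.

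For full faithfulness I would first treat objects of the form $X^m$ and $X^n$. The canonical map $\Hom_\A(Y,Z)\to\Hom_\Gamma(FY,FZ)$ is a homomorphism of abelian groups, natural and additive in each of the two variables, so bijectivity for all pairs $(X^m,X^n)$ reduces by additivity to the single case $Y=Z=X$. There the map becomes $\Gamma=\End_\A(X)\to\End_\Gamma(\Gamma)$, sending $f$ to left multiplication by $f$; this is bijective because the right $\Gamma$-module endomorphisms of $\Gamma$ are precisely the left multiplications (an endomorphism is determined by its value at $1$). Thus $F$ is fully faithful on the full subcategory of objects $X^n$.

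Next I would pass to arbitrary objects of $\add X$ by a retract argument. Given $Y,Z\in\add X$, choose splittings $Y\xrightarrow{i}X^m\xrightarrow{p}Y$ and $Z\xrightarrow{j}X^n\xrightarrow{q}Z$ with $pi=\id_Y$ and $qj=\id_Z$. For faithfulness, if $F(a)=0$ then $F(jap)=F(j)F(a)F(p)=0$, hence $jap=0$ by the case already treated, and therefore $a=q(jap)i=0$. For fullness, given $\alpha\colon FY\to FZ$ I would lift the morphism $F(j)\,\alpha\,F(p)\colon FX^m\to FX^n$ to a unique $b\colon X^m\to X^n$ and check that $a:=qbi$ satisfies $F(a)=F(q)F(b)F(i)=F(q)F(j)\,\alpha\,F(p)F(i)=\alpha$, using $qj=\id_Z$ and $pi=\id_Y$. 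This establishes that $F\colon\add X\to\proj\Gamma$ is fully faithful.

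Finally, assuming $\A$ has split idempotents, I would prove density. Any $P\in\proj\Gamma$ is the image of some idempotent $\bar e=\bar e^2$ in $\End_\Gamma(\Gamma^n)$, and under the isomorphism $\End_\A(X^n)\xrightarrow{\sim}\End_\Gamma(\Gamma^n)$ supplied by full faithfulness this lifts to a unique $e\in\End_\A(X^n)$, which is again idempotent since $F(e^2)=\bar e^2=\bar e=F(e)$ forces $e^2=e$. Splitting $e$ yields $Y\in\add X$ with $X^n\xrightarrow{\pi}Y\xrightarrow{\iota}X^n$, $\pi\iota=\id_Y$ and $\iota\pi=e$; applying $F$ exhibits $FY$ as the image of $\bar e$, so $FY\cong P$. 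I expect the only genuine content to lie here: everything in the full-faithfulness part is either the one-line base computation $\End_\A(X)\cong\End_\Gamma(\Gamma)$ or formal additivity and retract manipulations, whereas density is exactly the place where the split-idempotents hypothesis must be invoked, through lifting the idempotent presenting $P$ back to $\A$ and splitting it there.
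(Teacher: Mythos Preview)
Your proof is correct and follows essentially the same route as the paper's: both reduce full faithfulness to the base case $\End_\A(X)\cong\End_\Ga(\Ga)$ via additivity, and both obtain density from the split-idempotents hypothesis by realising an arbitrary $P\in\proj\Ga$ as the image of an idempotent on some $\Ga^n$. The only difference is that you spell out explicitly the retract argument extending full faithfulness from objects $X^n$ to arbitrary summands, and the idempotent-lifting step for density, whereas the paper compresses these into the phrases ``$F$ is additive and $\proj\Ga=\add\Ga$'' and ``therefore isomorphic to one in the image of $F$''.
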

\begin{proof}
We need to show that $F=\Hom_\A(X,-)$ induces a bijection
\[\Hom_\A(X',X'')\lto\Hom_\Ga(FX',FX'')\] for all $X',X''$ in $\add
X$. Clearly, the map is a bijection for $X'=X=X''$ since
$FX=\Ga$. From this the general case follows because $F$ is additive
and $\proj\Ga=\add\Ga$. Every object in $\proj\Ga$ is a direct summand
of $\Ga^n$ for some $n$ and therefore isomorphic to one in the image
of $F$ if $\A$ has split idempotents.  In that case $F$ induces an
equivalence between $\add X$ and $\proj\Gamma$.
\end{proof}

\begin{rem}\label{re:compl}
Every additive category $\A$ admits an \emph{idempotent completion}
$F\colon\A\to\bar\A$, that is, $\bar\A$ is an additive category with
split idempotents and the functor $F$ is fully faithful, additive, and
each object in $\bar\A$ is a direct summand of an object in the image
of $F$. For instance, if $\A=\add X$ for some object $X$ with
$\Ga=\End_\A(X)$, then one takes $\bar\A=\proj\Ga$ and
$F=\Hom_\A(X,-)$.
\end{rem}

\subsection*{Subobjects}
Let $\A$ be an abelian category. We say that two monomorphisms
$X_1\to X$ and $X_2\to X$ are \emph{equivalent},
if there exists an isomorphism $X_1\xto{\sim} X_2$ 
making the following diagram commutative.
\begin{equation*}
\xymatrix@=1.0em{X_1\ar[rd]\ar[rr]&&X_2\ar[ld]\\&X}
\end{equation*}
An equivalence class of monomorphisms into $X$ is called a
\emph{subobject} of $X$. Given subobjects $X_1\to X$ and $X_2\to X$,
we write $X_1\subseteq X_2$ if there is a morphism $X_1\to X_2$ making
the above diagram commutative. 

An object $X\neq 0$ is \emph{simple} if $X'\subseteq X$ implies $X'=0$
or $X'=X$.

Given a family of subobjects $(X_i)_{i\in I}$ of an object $X$, let
$\sum_{i\in I}X_i$ denote the smallest subobject of $X$ containing all
$X_i$, provided such an object exists. If the coproduct $\coprod_{i\in
I}X_i$ exists in $\A$, then $\sum_{i\in I}X_i$ equals the image of the
canonical morphism $\coprod_{i\in I}X_i\to X$.  The family of
subobjects $(X_i)_{i\in I}$ is \emph{directed} if for each pair
$i,j\in I$, there exists $k\in I$ with $X_{i},X_{j}\subseteq X_k$.

An object $X$ is \emph{finitely generated} if $X=\sum_{i\in I} X_i$
for some directed set of subobjects $X_i\subseteq X$ implies
$X=X_{i_0}$ for some index $i_0\in I$.

\begin{lem}\label{le:fgmax}
Let $X$ be a finitely generated object. Suppose that the subobjects of
$X$ form a set and that $\sum_{i\in I}X_i$ exists for every family of
subobjects $(X_i)_{i\in I}$.  Then every proper subobject of $X$ is
contained in a maximal subobject.
\end{lem}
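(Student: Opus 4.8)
The plan is to apply Zorn's lemma to the poset of proper subobjects of $X$. Fix a proper subobject $Y_0\subseteq X$ and let $P$ denote the collection of all proper subobjects $Y$ of $X$ with $Y_0\subseteq Y$, ordered by inclusion of subobjects. Since the subobjects of $X$ form a set by hypothesis, $P$ is a set, and it is non-empty because $Y_0\in P$. A maximal element of $P$ is precisely a maximal subobject of $X$ containing $Y_0$, so producing one for an arbitrary proper $Y_0$ yields the assertion.

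To invoke Zorn's lemma I would check that every chain in $P$ admits an upper bound in $P$. Let $(Y_j)_{j\in J}$ be a chain in $P$. Being totally ordered, the family $(Y_j)_{j\in J}$ is in particular directed, so by hypothesis the sum $Y=\sum_{j\in J}Y_j$ exists as a subobject of $X$. One has $Y_0\subseteq Y$ and $Y_j\subseteq Y$ for every $j$, so $Y$ dominates the chain; it then only remains to verify that $Y\in P$, that is, that $Y$ is a proper subobject.

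The decisive point, and the only place where the finite generation of $X$ is used, is to show $Y\neq X$. Suppose to the contrary that $Y=X$. Then $X=\sum_{j\in J}Y_j$ exhibits $X$ as the sum of a directed family of subobjects, so finite generation forces $X=Y_{j_0}$ for some index $j_0$, contradicting the properness of $Y_{j_0}$. Hence $Y$ is proper, so $Y\in P$ is the desired upper bound. By Zorn's lemma $P$ has a maximal element $M$; this $M$ is a proper subobject with $Y_0\subseteq M$, and any proper subobject strictly containing $M$ would again belong to $P$, contradicting maximality. Thus $M$ is a maximal subobject of $X$ containing $Y_0$.

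I do not anticipate a serious obstacle beyond this properness argument: the directedness of a chain makes the hypothesis on sums applicable, and finite generation is exactly what prevents the chain's sum from filling up $X$. The set-theoretic assumption that subobjects form a set is what legitimately turns $P$ into a set, so that Zorn's lemma may be applied without foundational difficulty.
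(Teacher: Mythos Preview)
Your proof is correct and is precisely the approach the paper has in mind: the paper's entire proof reads ``Apply Zorn's lemma,'' and you have carefully spelled out the standard argument, using directedness of a chain together with finite generation to ensure the sum of a chain of proper subobjects remains proper.
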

\begin{proof}
Apply Zorn's lemma.
\end{proof}

\begin{exm}
A $\La$-module $X$ is finitely generated if and only if there exist
elements $x_1,\ldots,x_n$ in $X$ such that $X=\sum_ix_i\La$.
\end{exm}

\subsection*{The Jacobson radical}

Let $X$ be an object in an abelian category. The \emph{radical} of $X$
is the intersection of all its maximal subobjects and is denoted by
$\rad X$. Note that $\p(\rad X)\subseteq\rad Y$ for every morphism
$\p\colon X\to Y$. Thus the assignment $X\mapsto \rad X$ defines a
subfunctor of the identity functor.

For a ring $\La$, the radical of the $\La$-module $\La$ is called
\emph{Jacobson radical} and will be denoted by $J(\La)$. The following
lemma implies that $J(\La)$ is a two-sided ideal.

\begin{lem}[Nakayama]
\label{le:nak}
Let $X$ be a $\La$-module. Then $X J(\La)\subseteq \rad X$.  In
particular, $X J(\La)=X$ implies $X=0$ provided that $X$ is finitely
generated.
\end{lem}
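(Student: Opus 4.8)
The plan is to prove the two assertions in turn, using the functoriality of the radical recorded just above and the existence of maximal subobjects furnished by Lemma~\ref{le:fgmax}. For the inclusion $XJ(\La)\subseteq\rad X$, I would fix $x\in X$ and consider the $\La$-linear map $\p\colon\La\to X$, $\la\mapsto x\la$. Since $X\mapsto\rad X$ is a subfunctor of the identity, we have $\p(\rad\La)\subseteq\rad X$. By definition $\rad\La=J(\La)$, and $\p(J(\La))=xJ(\La)$, so $xJ(\La)\subseteq\rad X$. Letting $x$ range over $X$ gives $XJ(\La)\subseteq\rad X$. This step carries essentially all the content of the first claim; it is just the observation preceding the lemma applied to the maps $\La\to X$.

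For the second assertion, suppose $X$ is finitely generated with $XJ(\La)=X$. Combining this with the first part yields $X=XJ(\La)\subseteq\rad X\subseteq X$, whence $\rad X=X$. I would then argue by contradiction and assume $X\neq0$. For a module over a ring the submodules form a set and the sum of any family of submodules exists, so the hypotheses of Lemma~\ref{le:fgmax} are satisfied. Applying that lemma to the proper subobject $0\subseteq X$ produces a maximal subobject $M$ of $X$. By the definition of the radical as the intersection of all maximal subobjects, $\rad X\subseteq M\subsetneq X$, contradicting $\rad X=X$. Hence $X=0$.

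The argument is short because the conceptual groundwork is already in place: the first claim is pure functoriality and the second reduces to the existence of a maximal proper submodule. The only point needing care is verifying that Lemma~\ref{le:fgmax} truly applies, namely that the set-theoretic hypothesis and the existence of sums of subobjects hold for $X$; both are immediate for modules over a ring, so no real obstacle remains.
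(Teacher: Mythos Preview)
Your proof is correct and follows essentially the same route as the paper: functoriality of the radical applied to the maps $\La\to X$, $\la\mapsto x\la$, for the inclusion $XJ(\La)\subseteq\rad X$, and Lemma~\ref{le:fgmax} to conclude that a nonzero finitely generated module has a maximal submodule, hence $\rad X\neq X$. The paper's proof is merely terser.
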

\begin{proof}
For any $x\in X$, left multiplication with $x$ induces a morphism
$\La\to X$, and therefore $x(\rad\La)\subseteq\rad X$.

If $X$ is finitely generated, then every proper submodule is contained
in a maximal submodule. Thus $\rad X=X$ implies $X=0$.
\end{proof}

The next lemma gives a more explicit description of the Jacobson
radical. In particular, one sees that it is a left-right symmetric
concept.

\begin{lem}\label{le:Jrad}
Let $\La$ be a ring. Then
\begin{align*}
J(\La)&=\{x\in\La\mid 1-xy\text{ has a right inverse for all }y\in\La\}\\
&=\{x\in\La\mid 1-y'xy\text{ is invertible for all }y,y'\in\La\}.
\end{align*}
In particular, $J(\La^\op)=J(\La)$.
\end{lem}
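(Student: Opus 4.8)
Write $J=J(\La)$, and denote by $A$ and $B$ the first and second sets on the right-hand side of the asserted identities. Since $J=\rad\La$ is a submodule of the right module $\La_\La$, it is a right ideal; this is the only structural property of $J$ that the argument will use.

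The plan is first to identify $J$ with $A$. For $J\subseteq A$, take $x\in J$ and $y\in\La$; then $xy\in J$, and if $1-xy$ had no right inverse the proper right ideal $(1-xy)\La$ would sit inside some maximal right ideal $M$, whence $1=(1-xy)+xy\in M$ (using $J\subseteq M$), a contradiction. For $A\subseteq J$, if $x\notin J$ then $x$ avoids some maximal right ideal $M$, so $M+x\La=\La$ and $1=m+xy$ with $m\in M$; as $x\in A$, the element $m=1-xy$ is right invertible, forcing $1\in M$, again absurd.

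The crux is to upgrade right invertibility to genuine invertibility. Given $x\in J$ and $y\in\La$, choose $u$ with $(1-xy)u=1$. Then $u-1=xyu\in J$ (here the right-ideal property is used once more), so $1-(1-u)=u$ is right invertible by the inclusion $J\subseteq A$ just proved, applied to $1-u\in J$. Thus $u$ has the left inverse $1-xy$ and a right inverse, hence is a unit and $1-xy=u^{-1}$ is a unit. I expect this passage from one-sided to two-sided inverses to be the main obstacle.

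It remains to bring in $B$ and the symmetry. The inclusion $B\subseteq A=J$ is immediate on setting $y'=1$. For $J\subseteq B$, I would invoke the standard fact that $1-ab$ is a unit if and only if $1-ba$ is: taking $a=y'$ and $b=xy$, the element $1-ba=1-x(yy')$ is a unit by the previous paragraph, so $1-y'xy=1-ab$ is a unit and $x\in B$. Finally the defining condition for $B$ is unchanged under reversing the multiplication, and invertibility is a two-sided notion, so $B$ is the same subset of $\La$ whether computed in $\La$ or in $\La^\op$; since $J(\La)=B=J(\La^\op)$, this yields the asserted symmetry (and shows in passing that $J(\La)$ is a two-sided ideal).
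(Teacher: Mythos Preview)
Your argument is correct and follows the same skeleton as the paper's proof: the identification $J=A$ is proved by exactly the same maximal-right-ideal argument, and your upgrade from right invertibility to two-sided invertibility of $1-xy$ (via $u-1=xyu\in J$, hence $u$ right invertible, hence a unit) is precisely the paper's computation with $z$ in place of $u$.

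Where you differ is in the passage to the set $B$. The paper simply asserts that ``it remains to show that $x\in J(\La)$ implies $1-x$ is invertible'' and stops there, leaving the reader to supply the step from $1-x$ (or $1-xy$) invertible to $1-y'xy$ invertible; since at this stage $J$ is only known to be a \emph{right} ideal, one cannot just say $y'xy\in J$. You fill this gap explicitly by invoking the standard fact that $1-ab$ is a unit iff $1-ba$ is, with $a=y'$ and $b=xy$, reducing to $1-x(yy')$. This is the cleanest way to complete the argument, and it also makes the symmetry $J(\La)=J(\La^{\op})$ transparent via the manifestly symmetric description $B$. So your proof is a more carefully spelled-out version of the paper's, with the Jacobson $1-ab$/$1-ba$ lemma as the one genuinely additional ingredient.
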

\begin{proof}
We have $x\in J(\La)$ if and only if $\frm +x\La\neq\La$ for every
maximal right ideal $\frm$, and this is equivalent to
$1-xy\not\in\frm$ for every $y\in\La$ and maximal $\frm$, that is,
$1-xy$ has a right inverse. This establishes the first equality.

For the second equality, it remains to show that $x\in J(\La)$ implies
$1-x$ is invertible.  We know there exists $z$ such that
$(1-x)z=1$. Thus $1-z=-xz\in J(\La)$, so there exists $z'$ such that
$(1-(1-z))z'=1$, that is, $zz'=1$. Hence $z$ is invertible, and so is
then also $1-x$.
\end{proof}

\subsection*{The radical of an additive category}

Let $\A$ be an additive category.  A \emph{two-sided ideal} $\frI$ of
$\A$ consists of subgroups $\frI(X,Y)\subseteq\Hom_\A(X,Y)$ for each
pair of objects $X,Y\in\A$ such that for every sequence $X'\xto{\s}
X\xto{\p}Y\xto{\t}Y'$ of morphisms in $\A$ with $\p\in\frI(X,Y)$ the
composite $\t\p\s$ belongs to $\frI(X',Y')$. Note that a morphism
$(\p_{ij})\colon\bigoplus_iX_i\to\bigoplus_jY_j$ belongs to an
ideal $\frI$ if and only if $\p_{ij}\in\frI$ for all $i,j$.

Given a pair $X,Y$ of objects of
$\A$, we define the \emph{radical}
\[\Rad_\A(X,Y):=\{\p\in\Hom_\A(X,Y)\mid \p\psi\in J(\End_\A(Y))\text{
  for all }\psi\in\Hom_\A(Y,X)\}.\] It follows from
Lemma~\ref{le:Jrad} that $\p\in\Hom_\A(X,Y)$ belongs to the radical if
and only if $\id_Y-\p\psi$ has a right inverse for every
$\psi\in\Hom_\A(Y,X)$.

\begin{prop} 
\label{le:radcat}
The radical $\Rad_\A$ is the unique two-sided ideal of $\A$
such that $\Rad_\A(X,X)=J(\End_\A(X))$ for every object $X\in\A$.
\end{prop}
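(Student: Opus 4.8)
The plan is to verify three things in turn: that $\Rad_\A$ is a two-sided ideal, that $\Rad_\A(X,X)=J(\End_\A(X))$ for all $X$, and that these two properties determine $\Rad_\A$ uniquely. Throughout I would lean on the reformulation recorded after the definition: by Lemma~\ref{le:Jrad}, a morphism $\p\colon X\to Y$ lies in $\Rad_\A(X,Y)$ exactly when $\id_Y-\p\psi$ has a right inverse for every $\psi\colon Y\to X$.

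That each $\Rad_\A(X,Y)$ is a subgroup is immediate: for fixed $\psi$ the map $\p\mapsto\p\psi$ is additive, so $\Rad_\A(X,Y)$ is an intersection of preimages of the subgroup $J(\End_\A(Y))$. Closure under pre-composition is equally direct, since for $\p\in\Rad_\A(X,Y)$ and $\s\colon X'\to X$ and any $\psi\colon Y\to X'$ we have $(\p\s)\psi=\p(\s\psi)\in J(\End_\A(Y))$. The one genuinely substantive part of the ideal axioms is closure under post-composition, which I expect to be the main obstacle. Given $\p\in\Rad_\A(X,Y)$ and $\t\colon Y\to Y'$, I must show $\id_{Y'}-\t\p\psi'$ has a right inverse for every $\psi'\colon Y'\to X$. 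Here I would invoke the standard fact that $\id-ab$ has a right inverse whenever $\id-ba$ does (if $(\id-ba)c=\id$, then $\id+acb$ is a right inverse of $\id-ab$), applied with $a=\t$ and $b=\p\psi'$: feeding $\psi'\t\colon Y\to X$ into the defining condition for $\p$ shows $\id_Y-\p\psi'\t=\id_Y-ba$ has a right inverse, hence so does $\id_{Y'}-\t\p\psi'=\id_{Y'}-ab$. What makes this step delicate is precisely that it crosses between the two rings $\End_\A(Y)$ and $\End_\A(Y')$. Combining pre- and post-composition then yields $\t\p\s\in\Rad_\A(X',Y')$ for any $X'\xto{\s}X\xto{\p}Y\xto{\t}Y'$ with $\p\in\Rad_\A(X,Y)$.

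The diagonal identity is then a one-liner: $J(\End_\A(X))\subseteq\Rad_\A(X,X)$ because $J$ is an ideal of $\End_\A(X)$, while the reverse inclusion follows by taking $\psi=\id_X$ in the defining condition.

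For uniqueness, let $\frI$ be any two-sided ideal with $\frI(X,X)=J(\End_\A(X))$. The inclusion $\frI\subseteq\Rad_\A$ is formal: for $\p\in\frI(X,Y)$ and $\psi\colon Y\to X$ the ideal property gives $\p\psi\in\frI(Y,Y)=J(\End_\A(Y))$. For the reverse inclusion I would pass to the direct sum $Z=X\oplus Y$ with its structure maps and show that $\tilde\p:=\iota_Y\p\pi_X$ lies in $J(\End_\A(Z))$ whenever $\p\in\Rad_\A(X,Y)$. Writing an endomorphism $\xi$ of $Z$ as a $2\times2$ matrix, $\id_Z-\tilde\p\xi$ comes out lower triangular with diagonal entries $\id_X$ and $\id_Y-\p b$, where $b=\pi_X\xi\iota_Y\colon Y\to X$; such a triangular matrix has a right inverse as soon as its diagonal entries do, and $\id_Y-\p b$ does by hypothesis on $\p$. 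Hence $\tilde\p\in J(\End_\A(Z))=\frI(Z,Z)$, and then $\p=\pi_Y\tilde\p\iota_X\in\frI(X,Y)$ by the ideal property. This gives $\Rad_\A\subseteq\frI$ and therefore $\frI=\Rad_\A$.
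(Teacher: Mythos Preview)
Your proof is correct and follows essentially the same path as the paper: the same right-inverse identity $(\id-ab)(\id+acb)=\id$ handles post-composition, and the uniqueness argument passes to $Z=X\oplus Y$ and uses that $\p\in\frI(X,Y)$ iff $\smatrix{0&0\\ \p&0}\in\frI(Z,Z)$. The only slight redundancy is your triangular-matrix computation showing $\tilde\p\in J(\End_\A(Z))$ directly; since you have already established that $\Rad_\A$ is an ideal with $\Rad_\A(Z,Z)=J(\End_\A(Z))$, this follows immediately from $\tilde\p=\iota_Y\p\pi_X\in\Rad_\A(Z,Z)$, which is how the paper phrases it.
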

\begin{proof}
Each set $\Rad_\A(X,Y)$ is a subgroup of $\Hom_\A(X,Y)$ since
$J(\End_\A(Y))$ is a subgroup of $\End_\A(Y)$.  Now fix a sequence
$X'\xto{\s} X\xto{\p}Y\xto{\t}Y'$ of morphisms in $\A$ with
$\p\in\Rad_\A(X,Y)$. Clearly, $\p\s\in\Rad_\A(X',Y)$ and it remains to
show that $\t\p\in\Rad_\A(X,Y')$. We use the description of the
Jacobson radical in Lemma~\ref{le:Jrad}. Choose
$\psi\in\Hom_\A(Y',X)$. Then $\id_Y-\p\psi\t$ has a right inverse, say
$\a\in\End_\A(Y)$, since $\p\in\Rad_\A(X,Y)$. A simple calculation
shows that $(\id_{Y'}-\t\p\psi)(\id_{Y'}+\t\a\p\psi)=\id_{Y'}$. It
follows that $\t\p$ belongs to $\Rad_\A(X,Y')$. Thus $\Rad_\A$ is a
two-sided ideal of $\A$.

It is clear from the definition that $\Rad_\A(X,X)=J(\End_\A(X))$ for
every $X\in\A$. Any two-sided ideal $\frI$ of $\A$ is determined by
the collection of subgroups $\frI(X,X)$, where $X$ runs through all
objects of $\A$. In fact, a morphism $\p\in\Hom_\A(X,Y)$ belongs to
$\frI(X,Y)$ if and only if $\smatrix{0&0\\ \p&0}$ belongs to $\frI(X\oplus
Y,X\oplus Y)$.
\end{proof}

The following description of the radical $\Rad_\A$ is a consequence;
it is symmetric and shows that $\Rad_{\A^\op}=\Rad_\A$.

\begin{cor}
Let $X,Y$ be a pair of objects of an additive category $\A$. Then the
following are equivalent for a morphism $\p\colon X\to Y$.
\begin{enumerate}
\item $\p\in\Rad_\A(X,Y)$.
\item $\id_Y-\p\psi$ has a right inverse for all morphisms
$Y\xto{\psi}X$.
\item $\t\p\s\in J(\End_\A(Z))$  for all morphisms
$Y\xto{\t}Z\xto{\s}X$.
\item $\id_Z-\t\p\s$ is invertible for all morphisms
$Y\xto{\t}Z\xto{\s}X$.\qed
\end{enumerate}
\end{cor}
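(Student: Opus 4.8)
The plan is to establish the four equivalences by working through a cycle of implications, drawing on the description of $\Rad_\A$ supplied by Proposition~\ref{le:radcat} together with the characterisations of the Jacobson radical from Lemma~\ref{le:Jrad}. The statement asserts that a purely categorical condition on $\p$ (membership in the radical ideal) is equivalent to three increasingly symmetric invertibility conditions. Since (2) is literally the reformulation of (1) already recorded in the remark following the definition of $\Rad_\A$, the equivalence $(1)\Leftrightarrow(2)$ requires no new argument. The real content lies in relating (1) to conditions (3) and (4), which range over an arbitrary intermediate object $Z$ rather than over $Y$ or $X$ alone.

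First I would prove $(1)\Rightarrow(3)$. Given $\p\in\Rad_\A(X,Y)$ and morphisms $Y\xto{\t}Z\xto{\s}X$, I want $\t\p\s\in J(\End_\A(Z))$. Since $\Rad_\A$ is a two-sided ideal (Proposition~\ref{le:radcat}), the composite $\t\p\s$ lies in $\Rad_\A(Z,Z)$, and by the defining property of that proposition $\Rad_\A(Z,Z)=J(\End_\A(Z))$; this gives (3) at once. The implication $(3)\Rightarrow(4)$ is immediate from Lemma~\ref{le:Jrad}: an element of the Jacobson radical $J(\End_\A(Z))$ has the property that $\id_Z$ minus it is invertible (take $y=y'=\id_Z$ in the second description), so $\id_Z-\t\p\s$ is invertible. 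For $(4)\Rightarrow(2)$, I specialise the intermediate object to $Z=Y$ and the map $\t$ to $\id_Y$: then (4) says $\id_Y-\p\s$ is invertible for every $\s\colon Y\to X$, which is stronger than the right-invertibility demanded in (2). Closing the cycle $(2)\Rightarrow(1)$ is again just the remark after the definition.

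The main obstacle, and the step deserving the most care, is $(1)\Rightarrow(3)$, specifically the claim that membership in the ideal $\Rad_\A$ forces $\t\p\s$ to land in the \emph{two-sided} ideal $J(\End_\A(Z))$ of the endomorphism ring, not merely in some one-sided analogue. This is exactly what Proposition~\ref{le:radcat} buys us — it identifies $\Rad_\A(Z,Z)$ with $J(\End_\A(Z))$ — so the argument hinges on invoking that proposition correctly rather than on any fresh computation. One subtlety worth flagging is the direction of composition: the definition of $\Rad_\A(X,Y)$ tests $\p\psi$ against $J(\End_\A(Y))$ with $\psi\colon Y\to X$, and the symmetry that makes condition (3) valid for both orderings of $\t,\s$ relies on the left--right symmetry of the Jacobson radical established in Lemma~\ref{le:Jrad}, namely $J(\La^\op)=J(\La)$; keeping this symmetry explicit prevents an accidental one-sided gap.

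Altogether the proof is a short circular chain $(1)\Rightarrow(3)\Rightarrow(4)\Rightarrow(2)\Rightarrow(1)$, with each arrow resting on an already-proved result: the ideal property and identity $\Rad_\A(Z,Z)=J(\End_\A(Z))$ from Proposition~\ref{le:radcat}, the invertibility characterisation of $J(\La)$ from Lemma~\ref{le:Jrad}, and the definitional remark for $\Rad_\A$. I expect no step to require more than a couple of lines, and the symmetry of the final formulation — that the same object $Z$ serves for any factorisation direction — will follow transparently once the identification of $\Rad_\A(Z,Z)$ with the two-sided Jacobson radical is in hand.
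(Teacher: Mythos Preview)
Your proposal is correct and matches the paper's intent: the corollary is stated with a terminal \qed and no written proof, the paper regarding it as an immediate consequence of Proposition~\ref{le:radcat} (the ideal property and the identification $\Rad_\A(Z,Z)=J(\End_\A(Z))$) together with Lemma~\ref{le:Jrad}. Your cycle $(1)\Rightarrow(3)\Rightarrow(4)\Rightarrow(2)\Rightarrow(1)$ is precisely the unpacking of that ``consequence'', and each step is justified exactly as you describe.
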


\section{Projective covers}

\subsection*{Essential epimorphisms}

Let $\A$ be an abelian category.  An epimorphism $\p\colon X\to Y$ is
\emph{essential} if any morphism $\a\colon X'\to X$ is an epimorphism
provided that the composite $\p\a$ is an epimorphism. This condition
can be rephrased as follows: If $U\subseteq X$ is a subobject with
$U+\Ker\p=X$, then $U=X$. We collect some basic facts.

\begin{lem}\label{le:compess}
Let $\p\colon X\to Y$ and $\psi\colon Y\to Z$ be epimorphisms. Then
$\psi\p$ is essential if and only if both $\p$ and $\psi$ are essential.
\qed\end{lem}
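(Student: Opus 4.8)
The plan is to prove the equivalence "$\psi\varphi$ essential $\iff$ $\varphi$ and $\psi$ essential" for epimorphisms by unwinding the definition directly, using the characterization that an epimorphism $p\colon X\to Y$ is essential precisely when every morphism $\alpha\colon X'\to X$ with $p\alpha$ epi is itself epi.

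First I would prove the backward direction. Assume both $\varphi$ and $\psi$ are essential, and let $\alpha\colon X'\to X$ be a morphism such that $(\psi\varphi)\alpha$ is an epimorphism. Since $(\psi\varphi)\alpha=\psi(\varphi\alpha)$ is epi, I can first regard $\varphi\alpha\colon X'\to Y$ as a morphism whose composite with $\psi$ is epi; essentiality of $\psi$ forces $\varphi\alpha$ to be an epimorphism. Now $\varphi\alpha$ epi together with essentiality of $\varphi$ forces $\alpha$ to be an epimorphism. Hence $\psi\varphi$ is essential.

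Next I would prove the forward direction, which splits into showing each factor is essential. To see $\varphi$ is essential, take $\alpha\colon X'\to X$ with $\varphi\alpha$ epi; then $\psi(\varphi\alpha)=(\psi\varphi)\alpha$ is a composite of epimorphisms, hence epi, so essentiality of $\psi\varphi$ gives $\alpha$ epi. To see $\psi$ is essential, take $\beta\colon Y'\to Y$ with $\psi\beta$ epi. The subtle point is that I want to feed $\beta$ through the essentiality of $\psi\varphi$, but $\beta$ lands in $Y$, not $X$. Here I would use that $\varphi$ is an epimorphism to relate subobjects of $X$ to subobjects of $Y$: rephrasing essentiality via the subobject condition, I need to show that a subobject $V\subseteq Y$ with $V+\Ker\psi=Y$ equals $Y$. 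Pulling $V$ back along $\varphi$ gives a subobject $U=\varphi^{-1}(V)\subseteq X$ with $\varphi(U)=V$ (using that $\varphi$ is epi), and one checks $U+\Ker(\psi\varphi)=X$; essentiality of $\psi\varphi$ then yields $U=X$, whence $V=\varphi(U)=\varphi(X)=Y$.

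The main obstacle is this last step for $\psi$: the asymmetry that $\beta$ or $V$ lives downstream in $Y$ while essentiality of $\psi\varphi$ is a statement about $X$. The clean way around it is to work with the subobject formulation of essentiality rather than the morphism formulation, and to use that $\varphi\colon X\to Y$ being an epimorphism makes the image map $U\mapsto\varphi(U)$ on subobjects surjective and compatible with sums and with the kernels, so that the relation $U+\Ker(\psi\varphi)=X$ transports correctly to $V+\Ker\psi=Y$. Once this correspondence is set up, both halves follow formally from Lemma~\ref{le:compess}'s hypotheses, and no further computation is needed.
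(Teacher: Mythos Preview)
Your argument is correct. The backward direction and the essentiality of $\varphi$ in the forward direction are immediate from the morphism formulation, exactly as you wrote. For the essentiality of $\psi$, the switch to the subobject formulation and the pullback $U=\varphi^{-1}(V)$ is the right move: since $\Ker(\psi\varphi)=\varphi^{-1}(\Ker\psi)$ contains $\Ker\varphi$, and since $\varphi$ is epi one has $\varphi(U+\Ker(\psi\varphi))=V+\Ker\psi=Y$, which together with $\Ker\varphi\subseteq U+\Ker(\psi\varphi)$ forces $U+\Ker(\psi\varphi)=X$; then essentiality of $\psi\varphi$ gives $U=X$ and hence $V=\varphi(X)=Y$.

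As for comparison with the paper: there is nothing to compare. The paper states Lemma~\ref{le:compess} and immediately places a \qed, leaving the verification to the reader. Your write-up supplies precisely the routine check the author omitted.
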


\begin{lem}\label{le:sumess}
Let $X_i\to Y_i$ $(i=1,\ldots,n)$ be essential epimorphisms. Then
$\bigoplus_{i}X_i\to\bigoplus_{i}Y_i$ is an essential epimorphism.
\end{lem}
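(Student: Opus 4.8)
The plan is to reduce the statement to the case of a single nontrivial summand and then to exploit the subobject characterisation of essential epimorphisms together with modularity of the lattice of subobjects. First I would factor $\bigoplus_i X_i\to\bigoplus_i Y_i$ as a composite of $n$ epimorphisms
\[\bigoplus_i X_i=X_1\oplus\cdots\oplus X_n\to Y_1\oplus X_2\oplus\cdots\oplus X_n\to\cdots\to\bigoplus_i Y_i,\]
where the $k$-th map is $\id$ on every summand except the $k$-th, where it is the given $\varphi_k\colon X_k\to Y_k$. Each factor is an epimorphism, since a direct sum of epimorphisms is one, so by iterated use of Lemma~\ref{le:compess} it suffices to show that each factor is essential. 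Grouping the identity summands into a single object $Z$, every factor has the form $\varphi\oplus\id_Z\colon X\oplus Z\to Y\oplus Z$ with $\varphi$ essential, so the whole lemma reduces to the claim that \emph{$\varphi\oplus\id_Z$ is essential whenever $\varphi$ is}.

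To prove this claim I would use the rephrasing of essentiality recorded before the lemma. The kernel of $\varphi\oplus\id_Z$ is $\Ker\varphi\oplus 0$ as a subobject of $X\oplus Z$. Let $V\subseteq X\oplus Z$ be a subobject with $V+(\Ker\varphi\oplus 0)=X\oplus Z$; I must show $V=X\oplus Z$. Intersecting with the summand $X\oplus 0$ and applying the modular law, which is legitimate since $\Ker\varphi\oplus 0\subseteq X\oplus 0$, gives
\[X\oplus 0=(X\oplus 0)\cap\bigl(V+(\Ker\varphi\oplus 0)\bigr)=\bigl((X\oplus 0)\cap V\bigr)+(\Ker\varphi\oplus 0).\]
Read inside $X\oplus 0\cong X$, the subobject $U:=(X\oplus 0)\cap V$ then satisfies $U+\Ker\varphi=X$, so essentiality of $\varphi$ forces $U=X$, that is, $X\oplus 0\subseteq V$. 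But now $\Ker\varphi\oplus 0\subseteq X\oplus 0\subseteq V$, whence $V=V+(\Ker\varphi\oplus 0)=X\oplus Z$, as required.

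The point that needs care, and the main obstacle, is that in a general abelian category one cannot chase elements, so the implication $V+(\Ker\varphi\oplus 0)=X\oplus Z\Rightarrow V=X\oplus Z$ must be obtained by honest lattice manipulation rather than by writing $(0,z)=v+k$. The decisive input is the modular law for the lattice of subobjects of $X\oplus Z$, which is precisely what allows the essentiality hypothesis on $\varphi$ to be transported to the subobject $(X\oplus 0)\cap V$. The same argument can instead be phrased through the morphism definition, by testing candidate maps into $X\oplus Z$ against the projections, but the subobject formulation keeps the bookkeeping minimal and isolates the single use of modularity.
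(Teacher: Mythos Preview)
Your proof is correct and follows essentially the same strategy as the paper: factor the direct-sum map as a composite of maps that change a single summand at a time, then invoke Lemma~\ref{le:compess}. The paper merely asserts that each factor $\varphi\oplus\id_Z$ is essential as ``straightforward to check,'' whereas you supply the details via the modular law; your argument is a legitimate fleshing-out of exactly that omitted verification.
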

\begin{proof}
It is sufficient to pove the case $n=2$.  In that case write
$\bigoplus_{i}X_i\to\bigoplus_{i}Y_i$ as composite $X_1\oplus X_2\to
X_1\oplus Y_2\to Y_1\oplus Y_2$. It is straightforward to check that
both morphisms are essential. Thus the composite is essential.
\end{proof}

\begin{lem}\label{le:radess}
Let $\p\colon X\to Y$ be an epimorphism and  $U=\Ker\p$.
\begin{enumerate}
\item If $\p$ is essential, then $U \subseteq \rad X$.
\item If $U\subseteq \rad X$ and  $X$ is finitely generated, then $\p$ is essential.  
\end{enumerate}
\end{lem}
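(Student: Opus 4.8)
The plan is to prove each direction by using the reformulation of essentiality in terms of subobjects. Recall that an epimorphism $\p\colon X\to Y$ with kernel $U=\Ker\p$ is essential if and only if every subobject $V\subseteq X$ with $V+U=X$ must equal $X$. This is the characterization I would lean on throughout, so the whole argument reduces to manipulating subobjects of $X$ and their sums.

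For part (1), I would argue by contraposition: suppose $U\not\subseteq\rad X$. Then there is a maximal subobject $M\subseteq X$ with $U\not\subseteq M$. Since $M$ is maximal and does not contain $U$, the subobject $M+U$ strictly contains $M$, hence $M+U=X$. But $M\neq X$ because $M$ is a proper (maximal) subobject, so the condition $M+U=X$ with $M\neq X$ directly contradicts essentiality of $\p$. Hence $U\subseteq M$ for every maximal subobject $M$, and intersecting gives $U\subseteq\rad X$.

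For part (2), I would again use the subobject criterion. Assume $U\subseteq\rad X$ and $X$ finitely generated, and let $V\subseteq X$ satisfy $V+U=X$; the goal is $V=X$. Suppose not, so $V$ is a proper subobject. Because $X$ is finitely generated and (implicitly) its subobjects form a set closed under the relevant sums, Lemma~\ref{le:fgmax} applies, so $V$ is contained in some maximal subobject $M\subsetneq X$. Then $U\subseteq\rad X\subseteq M$ and $V\subseteq M$, whence $X=V+U\subseteq M$, forcing $M=X$, a contradiction. Therefore $V=X$, and $\p$ is essential.

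I expect the main obstacle to be the set-theoretic hypotheses needed to invoke Lemma~\ref{le:fgmax} in part (2): that lemma requires the subobjects of $X$ to form a set and arbitrary sums of subobjects to exist, conditions not literally assumed in the statement of this lemma. I would either fold these into the standing assumptions on the ambient abelian category or remark that they hold in the intended applications (e.g.\ module categories). The rest is a clean interplay between maximality of subobjects and the defining property $V+U=X$ of essential epimorphisms, so no delicate computation is involved.
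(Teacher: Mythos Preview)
Your proof is correct and follows essentially the same route as the paper: both parts use the subobject reformulation of essentiality, part (1) derives a contradiction from a maximal subobject not containing $U$, and part (2) invokes Lemma~\ref{le:fgmax} to enlarge a proper $V$ to a maximal subobject and reach a contradiction. Your remark about the implicit set-theoretic hypotheses for Lemma~\ref{le:fgmax} is apt; the paper indeed makes these standing assumptions elsewhere without restating them here.
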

\begin{proof}
(1) Suppose that $\p$ is essential and let $V\subseteq X$ be a maximal
subobject not containing $U$. Then $U+V=X$ and therefore $V=X$. This
is a contradiction and therefore $U$ is contained in every maximal
subobject. This implies $U\subseteq\rad X$.

(2) Suppose that $U\subseteq\rad X$ and let $V\subseteq X$ be a
subobject with $U+V=X$.  If $V\neq X$, then there is a maximal
subobject $V'\subseteq X$ containing $V$ since $X$ is finitely
generated; see Lemma~\ref{le:fgmax}. Thus $X=U+V\subseteq V'$. This is a contradiction and
therefore $V=X$.  It follows that $\p$ is essential.
\end{proof}

\subsection*{Projective covers}

Let $\A$ be an abelian category.  An epimorphism $\p\colon P\to X$ is
called a \emph{projective cover} of $X$ if $P$ is projective and $\p$
is essential.

\begin{lem}\label{le:procov}
Let $P$ be a projective object. Then the following are equivalent for
an epimorphism $\p\colon P\to X$.
\begin{enumerate}
\item The morphism $\p$ is a projective cover of $X$.
\item Every endomorphism $\a\colon P\to P$ satisfying $\p\a=\p$
is an isomorphism.
\end{enumerate}
\end{lem}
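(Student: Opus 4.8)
The plan is to prove the equivalence of the two conditions for an epimorphism $\p\colon P\to X$ with $P$ projective, by establishing both implications directly from the definitions.

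For the implication $(1)\Rightarrow(2)$, I would start with an endomorphism $\a\colon P\to P$ satisfying $\p\a=\p$. Since $\p=\p\a$ is an epimorphism and $\p$ is essential, the defining property of essential epimorphisms (applied to the morphism $\a\colon P\to P$) immediately forces $\a$ to be an epimorphism. The remaining task is to promote this epimorphism to an isomorphism. Here I would invoke projectivity of $P$: since $\a\colon P\to P$ is an epimorphism and $P$ is projective, the identity $\id_P\colon P\to P$ lifts along $\a$, yielding $\b\colon P\to P$ with $\a\b=\id_P$. Thus $\a$ is a split epimorphism. To conclude that $\a$ is actually invertible, I would argue that $\b$ is also an epimorphism — indeed $\p\b = \p\a\b=\p$ shows $\b$ satisfies the same hypothesis, so by the same reasoning $\b$ is an epimorphism; being a split monomorphism (from $\a\b=\id_P$) that is also epi, $\b$ is an isomorphism, and hence so is $\a$.

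For the converse $(2)\Rightarrow(1)$, I would assume every $\a$ with $\p\a=\p$ is an isomorphism and show $\p$ is essential. Take any morphism $\s\colon P'\to P$ such that $\p\s$ is an epimorphism; I must show $\s$ is an epimorphism. Using projectivity of $P$ together with the fact that $\p\s\colon P'\to X$ is epi, I would lift $\p$ through $\p\s$: there exists $\t\colon P\to P'$ with $(\p\s)\t=\p$, i.e.\ $\p(\s\t)=\p$. By hypothesis $\s\t$ is then an isomorphism, so $\s$ is a (split) epimorphism, as required. This shows $\p$ is essential, and since $P$ is already projective by assumption, $\p$ is a projective cover.

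The main obstacle I anticipate is the step in $(1)\Rightarrow(2)$ where I must upgrade the split epimorphism $\a$ to a genuine isomorphism: knowing only that $\a$ is epi and split does not immediately give invertibility, and one must feed the relation $\p\a=\p$ back into the hypothesis to control the splitting $\b$. The cleanest route is to observe that any such $\b$ with $\a\b=\id_P$ automatically satisfies $\p\b=\p$, reducing the question to showing $\b$ is an isomorphism and exploiting the symmetry between the two conditions; this avoids any appeal to chain conditions or local endomorphism rings, keeping the argument purely formal and valid in an arbitrary abelian category.
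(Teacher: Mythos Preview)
Your proof is correct and follows essentially the same route as the paper's own argument: in both directions you use projectivity of $P$ to produce a one-sided inverse and then feed the relation $\p\b=\p$ (respectively $\p(\s\t)=\p$) back into the hypothesis to conclude. The only differences are notational.
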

\begin{proof}
(1) $\Rightarrow$ (2): Let $\a\colon P\to P$ be an endomorphism
    satisfying $\p\a=\p$. Then $\a$ is an epimorphism since $\p$
    is essential. Thus there exists $\a'\colon P\to P$ satisfying
    $\a\a'=\id_P$ since $P$ is projective.  It follows that
    $\p\a'=\p$ and therefore $\a'$ is an epimorphism. On the
    other hand, $\a'$ is a monomorphism. Thus $\a'$ and $\a$ are
    isomorphisms.

(2) $\Rightarrow$ (1): Let $\a\colon P'\to P$ be a morphism such that
    $\p\a$ is an epimorphism. Then $\p$ factors through
    $\p\a$ via a morphism $\a'\colon P\to P'$ since $P$ is
    projective. The composite $\a\a'$ is an isomorphism and
    therefore $\a$ is an epimorphism. Thus $\p$ is essential.
\end{proof}

\begin{cor}\label{co:cov}
Let $\p\colon P\to X$ and $\p'\colon P'\to X$ be projective covers of
an object $X$. Then there is an isomorphism $\a\colon P\to P'$ such
that $\p=\p'\a$.
\end{cor}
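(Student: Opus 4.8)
The plan is to build the isomorphism $\a$ together with a candidate inverse by exploiting projectivity twice, and then to verify that the two composites are isomorphisms by feeding them into the characterisation of projective covers from Lemma~\ref{le:procov}. The whole argument is formal once the two liftings are in place.

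First I would use that $P$ is projective and $\p'\colon P'\to X$ is an epimorphism to lift $\p$ along $\p'$, obtaining a morphism $\a\colon P\to P'$ with $\p'\a=\p$. Symmetrically, since $P'$ is projective and $\p\colon P\to X$ is an epimorphism, I would obtain $\b\colon P'\to P$ with $\p\b=\p'$. Next I consider the endomorphism $\b\a\colon P\to P$. It satisfies $\p(\b\a)=(\p\b)\a=\p'\a=\p$, so the implication $(1)\Rightarrow(2)$ of Lemma~\ref{le:procov}, applied to the projective cover $\p$, shows that $\b\a$ is an isomorphism. Applying the same reasoning to $\a\b\colon P'\to P'$, which satisfies $\p'(\a\b)=(\p'\a)\b=\p\b=\p'$, and now invoking Lemma~\ref{le:procov} for the projective cover $\p'$, shows that $\a\b$ is an isomorphism as well.

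Finally, since $\b\a$ is invertible the map $\a$ admits the left inverse $(\b\a)^{-1}\b$, and since $\a\b$ is invertible it admits the right inverse $\b(\a\b)^{-1}$; a morphism possessing both a left and a right inverse is an isomorphism, the two inverses necessarily coinciding. Hence $\a$ is an isomorphism with $\p'\a=\p$, as required. The only point requiring care is to apply Lemma~\ref{le:procov} to the correct cover for each of the two composites $\b\a$ and $\a\b$; since all the genuine content is supplied by that lemma, I expect no serious obstacle.
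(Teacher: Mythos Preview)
Your argument is correct and is precisely the proof the paper leaves implicit: the corollary follows Lemma~\ref{le:procov} with no further input, and your two liftings followed by two applications of (1)$\Rightarrow$(2) in that lemma are exactly the intended steps. The final paragraph showing that $\a$ is invertible from the invertibility of $\b\a$ and $\a\b$ is the standard two-sided-inverse argument and needs no change.
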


A ring is called \emph{local} if the sum of two non-units is again a
non-unit.

\begin{lem}
\label{le:covsimple}
Let $\p\colon P\to S$ be an epimorphism such that $P$ is projective
and $S$ is simple.  Then the following are equivalent.
\begin{enumerate}
\item The morphism $\p$ is a projective cover of $S$.
\item The object $P$ has a maximal subobject that contains every
  proper subobject of $P$.
\item The endomorphism ring of $P$ is local.
\end{enumerate}
\end{lem}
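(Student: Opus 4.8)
The plan is to prove the three implications $(1)\Rightarrow(2)\Rightarrow(3)\Rightarrow(1)$, exploiting the fact that $S$ is simple so that $\Ker\p$ is a maximal subobject of $P$ and the only proper subobject containing it is itself.

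First I would establish $(1)\Rightarrow(2)$. Assume $\p$ is a projective cover. By Lemma~\ref{le:radess}(1), since $\p$ is essential, $U:=\Ker\p\subseteq\rad P$. Because $S$ is simple, $U$ is a maximal subobject of $P$: any subobject strictly between $U$ and $P$ would map to a proper nonzero subobject of $S$. I would then argue $U$ contains every proper subobject $V\subsetneq P$. Indeed, if $V\not\subseteq U$, then $U+V$ strictly contains $U$, hence $U+V=P$ by maximality of $U$; since $\p$ is essential this forces $V=P$, contradicting properness. So $U$ is the desired maximal subobject containing all proper subobjects.

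Next, $(2)\Rightarrow(3)$. Let $V_0\subsetneq P$ be the maximal subobject containing every proper subobject. I want to show $\End_\A(P)$ is local, i.e.\ the sum of two non-units is a non-unit. An endomorphism $\a\colon P\to P$ is a non-unit precisely when it fails to be an isomorphism; since $P$ is projective and $S$ simple, I expect the key dichotomy to be that $\a$ is either an isomorphism or has image contained in $V_0$. Concretely, the image $\Im\a$ is a subobject of $P$; if $\Im\a=P$ then $\a$ is an epimorphism, and I would leverage projectivity of $P$ to split $\a$ and promote it to an isomorphism (using that $V_0\neq P$ to rule out a nontrivial kernel, or by a length-type argument at the level of $\p\a$ composed with simplicity of $S$). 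If $\Im\a\neq P$, then $\Im\a\subseteq V_0$. The non-units thus form exactly the set of morphisms factoring through $V_0\hookrightarrow P$, which is closed under addition since $V_0$ is a single subobject; hence the sum of two non-units again factors through $V_0$ and is a non-unit. This shows $\End_\A(P)$ is local.

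Finally, $(3)\Rightarrow(1)$. Assume $\Gamma=\End_\A(P)$ is local and use Lemma~\ref{le:procov}: it suffices to show that every $\a\in\Gamma$ with $\p\a=\p$ is an isomorphism. From $\p\a=\p$ we get $\p(\id_P-\a)=0$, so $\id_P-\a$ factors through $U=\Ker\p$; I expect $\id_P-\a$ to be a non-unit because its image lies in the proper subobject $U$. In a local ring, if $\id_P-\a$ is a non-unit then $\a=\id_P-(\id_P-\a)$ must be a unit, since otherwise $\id_P=(\id_P-\a)+\a$ would be a sum of two non-units, contradicting locality. Hence $\a$ is an isomorphism and $\p$ is a projective cover by Lemma~\ref{le:procov}. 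The main obstacle I anticipate is the dichotomy step in $(2)\Rightarrow(3)$: carefully justifying, without a length or finiteness hypothesis, that a non-surjective endomorphism has image inside $V_0$ and that a surjective one is automatically an isomorphism; this is where I would rely most heavily on the simplicity of $S$, the projectivity of $P$, and the defining property of $V_0$.
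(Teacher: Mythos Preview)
Your proof is correct and follows the same cycle $(1)\Rightarrow(2)\Rightarrow(3)\Rightarrow(1)$ as the paper, with essentially identical arguments for the first two implications. Your worry about the dichotomy in $(2)\Rightarrow(3)$ is unfounded: the clean way to phrase it is that $(2)$ forces $P$ to be indecomposable (if $P=P_1\oplus P_2$ with both nonzero, then $P_1,P_2\subseteq V_0$, so $P\subseteq V_0$), and then any epimorphic endomorphism of a projective indecomposable object is invertible because the section it admits produces a splitting idempotent, which must be $\id_P$. This is exactly how the paper dispatches the step.

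Where you genuinely diverge is $(3)\Rightarrow(1)$. The paper argues via Nakayama's lemma: if $\p=\p\a$ with $\a$ in the Jacobson radical of $\Gamma=\End_\A(P)$, then the cyclic $\Gamma$-module $H=\p\Gamma\subseteq\Hom_\A(P,S)$ satisfies $H=HJ(\Gamma)$, forcing $H=0$ and hence $\p=0$, a contradiction. Your route is more elementary: from $\p(\id_P-\a)=0$ you factor $\id_P-\a$ through $\Ker\p\subsetneq P$, so $\id_P-\a$ is not an epimorphism and hence a non-unit; locality then forces $\a$ to be a unit. Your argument avoids Nakayama entirely and stays inside $\A$, while the paper's version illustrates the module-theoretic viewpoint that pervades the rest of the exposition. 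Both are valid; yours is shorter here, the paper's is more in keeping with its theme of translating categorical statements into module theory over endomorphism rings.
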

\begin{proof}
(1) $\Rightarrow$ (2): Let $U\subseteq P$ be a subobject and suppose
    $U\not\subseteq \Ker\p$. Then $U+\Ker\p=P$, and therefore $U=P$
    since $\p$ is essential.  Thus $\Ker\p$ contains every proper
    subobject of $P$.

(2) $\Rightarrow$ (3): First observe that $P$ is an indecomposable
object.  It follows that every endomorphism of $P$ is invertible if
and only if it is an epimorphism. Given two non-units $\a,\b$ in
$\End_\A(P)$, we have therefore
$\Im(\a+\b)\subseteq\Im\a+\Im\b\subseteq\rad P$.  Here we use that
$\rad P$ contains every proper subobject of $P$. Thus $\a+\b$ is a
non-unit and $\End_\A(P)$ is local.

(3) $\Rightarrow$ (1): Consider the $\End_\A(P)$-submodule $H$ of
$\Hom_\A(P,S)$ which is generated by $\p$. Suppose $\p=\p\a$ for some
$\a$ in $\End_\A(P)$.  If $\a$ belongs to the Jacobson radical, then
$H=H J(\End_\A(P))$, which is not possible by Lemma~\ref{le:nak}. Thus
$\a$ is an isomorphism since $\End_\A(P)$ is local. It follows from
Lemma~\ref{le:procov} that $\p$ is a projective cover.
\end{proof}

\subsection*{Maximal subobjects of projectives}

Let $\A$ be an abelian category.  We need to assume that for each
object $X$ the subobjects of $X$ form a set and that $\sum_{i\in
I}X_i$ exists for each family of subobjects $(X_i)_{i\in I}$.  Given a
subobject $U\subseteq X$, we set
\[\End_\A(U|X):=\{\p\in\End_\A(X)\mid\Im\p\subseteq U\}.\] 

\begin{prop}\label{pr:maxsub}
Let $\A$ be an abelian category and $X$ a finitely generated
projective object. The maps
\[X\supseteq
U\mapsto\End_\A(U|X)\quad\text{and}\quad
\End_\A(X)\supseteq\fra\mapsto\sum_{\End_\A(V|X)\subseteq\fra}V\]
induces mutually inverse bijections between the maximal subobjects of
$X$ and the maximal right ideals of $\End_\A(X)$.
\end{prop}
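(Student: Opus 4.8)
Write $\Ga=\End_\A(X)$, and for a subobject $U\subseteq X$ abbreviate $\Phi(U):=\End_\A(U|X)$, while for a right ideal $\fra\subseteq\Ga$ write $\Psi(\fra):=\sum_{\Phi(V)\subseteq\fra}V$. The starting point is that, since $X$ is projective, the functor $\Hom_\A(X,-)$ is exact, and that a morphism $\p\colon X\to X$ factors through the inclusion $U\hookrightarrow X$ precisely when $\Im\p\subseteq U$. This identifies $\Phi(U)$ with the right ideal $\Hom_\A(X,U)\subseteq\Ga$, so $\Phi$ is an inclusion-preserving map from subobjects to right ideals and $\Psi$ one in the opposite direction. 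The plan is to show that $\Phi$ and $\Psi$ restrict to mutually inverse bijections on the maximal elements, which I split into three tasks: (A) $\Phi$ carries maximal subobjects to maximal ideals; (B) $\Psi\Phi(U)=U$ for maximal $U$; and (C) $\Psi$ carries a maximal ideal $\fra$ to a maximal subobject with $\Phi\Psi(\fra)=\fra$.

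For (A), let $U$ be maximal and $S=X/U$, which is simple. Applying the exact functor $\Hom_\A(X,-)$ to the projection $X\to S$ yields a surjection $\Ga\to\Hom_\A(X,S)$ with kernel $\Phi(U)$, so $\Ga/\Phi(U)\cong\Hom_\A(X,S)$ as right $\Ga$-modules. I would then check that this module is simple: any non-zero $f\colon X\to S$ is an epimorphism because $S$ is simple, and since $X$ is projective every $\p'\colon X\to S$ factors as $\p'=f\g$ for some $\g\in\Ga$, so $f$ generates the whole module. Hence $\Phi(U)$ is maximal. For (B) it suffices to prove $\Phi(V)\subseteq\Phi(U)\Rightarrow V\subseteq U$. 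If $V\not\subseteq U$, then $U+V=X$ by maximality, the composite $V\hookrightarrow X\to S$ is an epimorphism, and lifting $X\to S$ through it (again using projectivity) produces $\p\in\Phi(V)$ with $\Im\p\not\subseteq U$, i.e.\ $\p\notin\Phi(U)$, a contradiction.

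The reverse direction (C) is where the work lies. Fix a maximal right ideal $\fra$ and put $U=\Psi(\fra)$. First I would show that the indexing family $\{V\mid\Phi(V)\subseteq\fra\}$ is directed: given $V_1,V_2$ in it and $\p\colon X\to V_1+V_2$, projectivity lifts $\p$ along the epimorphism $V_1\oplus V_2\to V_1+V_2$ and writes it as $\p=\p_1+\p_2$ with $\p_i\in\Phi(V_i)\subseteq\fra$, so $\Phi(V_1+V_2)\subseteq\fra$. Since every member of this directed family is proper (as $\Phi(X)=\Ga\not\subseteq\fra$) and $X$ is finitely generated, the sum $U$ is itself a proper subobject. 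Next I would prove $\fra\subseteq\Phi(U)$: for $\p\in\fra$ and any $\psi$ with $\Im\psi\subseteq\Im\p$, lifting $\psi$ along the corestriction $X\to\Im\p$ gives $\psi=\p\g\in\fra$ (here it is essential that $\fra$ is a right ideal), so $\Phi(\Im\p)\subseteq\fra$, whence $\Im\p\subseteq U$ and $\p\in\Phi(U)$. As $U$ is proper we have $\Phi(U)\neq\Ga$, so maximality of $\fra$ forces $\Phi(U)=\fra$. Finally $U$ is maximal: if $U\subsetneq U'$, then $\fra=\Phi(U)\subseteq\Phi(U')$, and $\Phi(U')=\fra$ is impossible since it would give $U'\subseteq\Psi\Phi(U')=\Psi(\fra)=U$; hence $\Phi(U')=\Ga$ and $U'=X$.

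The main obstacle is exactly step (C): the definition of $\Psi$ as a possibly infinite sum tempts one to place a finitely generated subobject inside a single member of the family, which would require exactness of directed unions of subobjects, an AB5-type hypothesis that is not assumed here. The plan deliberately sidesteps this by extracting only properness of $U$ from finite generation, by pinning down $\Phi(U)=\fra$ through the inclusion $\fra\subseteq\Phi(U)$ together with maximality of $\fra$, and by deducing maximality of $U$ from the formal identity $U'\subseteq\Psi\Phi(U')$ rather than from any lattice-continuity property.
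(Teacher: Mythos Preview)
Your proof is correct and follows essentially the same route as the paper: the argument that $\Phi$ carries maximal subobjects to maximal ideals via the short exact sequence for $\Hom_\A(X,-)$, the directedness of the family $\{V:\Phi(V)\subseteq\fra\}$, the use of finite generation to get properness of $\Psi(\fra)$, the inclusion $\fra\subseteq\Phi(\Psi(\fra))$ via projectivity, and the deduction of maximality all match the paper step for step (and you supply considerably more detail than the paper does at each stage). The one addition is your explicit step~(B) showing $\Psi\Phi(U)=U$; the paper omits this, implicitly relying on the trivial inclusion $U\subseteq\Psi\Phi(U)$ together with the fact, established in the second half, that $\Psi\Phi(U)$ is itself maximal.
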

\begin{proof}
A subobject $U\subseteq X$ induces an exact sequence
\[0\to\Hom_\A(X,U)\to\Hom_\A(X,X)\to\Hom_\A(X,X/U)\to 0.\] If
$U\subseteq X$ is maximal, then $\Hom_\A(X,X/U)$ is a simple
$\End_\A(X)$-module and therefore $\End_\A(U|X)$ is a maximal right ideal.

Now fix a maximal right ideal $\frm$ of $\End_\A(X)$ and let $U=\sum_{V\in\V}
V$ where $\V$ denotes the set of subobjects
$V\subseteq X$ with $\End_\A(V|X)\subseteq\frm$.  First notice that
$\frm\subseteq\End_\A(U|X)$ since $X$ is projective.  Next observe
that $\V$ is directed since $V_1,V_2\in\V$ implies $V_1+V_2\in\V$.
Thus $U$ is a proper subobject of $X$ since $X$ is finitely
generated. In particular, $\End_\A(U|X)=\frm$.  If $W\subseteq X$ is a
subobject properly containing $U$, then $\End_\A(W|X)$ properly
contains $\frm$ and equals therefore $\End_\A(X)$. Thus $W=X$. It
follows that $U$ is maximal.
\end{proof}

\begin{cor}\label{co:projrad}
Let $\p\colon X\to Y$ be a morphism and suppose $Y$ is finitely
generated projective. Then $\Im\p\subseteq\rad Y$ if and only if
$\p\in\Rad_\A(X,Y)$.
\end{cor}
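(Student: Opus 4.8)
The plan is to reduce the whole statement to a single identity for the finitely generated projective object $Y$, namely
\[
J(\End_\A(Y))=\{\a\in\End_\A(Y)\mid\Im\a\subseteq\rad Y\}=\End_\A(\rad Y|Y),
\]
and then to deduce each of the two implications from it. First I would establish this identity using Proposition~\ref{pr:maxsub}. The Jacobson radical $J(\End_\A(Y))$ is by definition the intersection of all maximal right ideals of $\End_\A(Y)$. By the bijection of Proposition~\ref{pr:maxsub}, these maximal right ideals are exactly the subgroups $\End_\A(U|Y)$ with $U$ ranging over the maximal subobjects of $Y$. An endomorphism $\a$ lies in $\End_\A(U|Y)$ for every such $U$ precisely when $\Im\a\subseteq U$ for all maximal $U$, that is, when $\Im\a\subseteq\rad Y$; so intersecting gives $J(\End_\A(Y))=\End_\A(\rad Y|Y)$.

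For the implication $\Im\p\subseteq\rad Y\Rightarrow\p\in\Rad_\A(X,Y)$, I would take any $\psi\colon Y\to X$ and look at the endomorphism $\p\psi$ of $Y$. Since $\Im(\p\psi)\subseteq\Im\p\subseteq\rad Y$, the identity above yields $\p\psi\in J(\End_\A(Y))$. As this holds for every $\psi$, the definition of $\Rad_\A$ gives $\p\in\Rad_\A(X,Y)$ directly.

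For the converse I would argue by contraposition, and here projectivity of $Y$ enters. Suppose $\Im\p\not\subseteq\rad Y$. Since $\rad Y$ is the intersection of the maximal subobjects of $Y$, there is a maximal subobject $U\subseteq Y$ with $\Im\p\not\subseteq U$, so the composite $\pi\p\colon X\to Y/U$ with the projection $\pi\colon Y\to Y/U$ is an epimorphism. Because $Y$ is projective, $\pi$ lifts along this epimorphism to a morphism $\psi\colon Y\to X$ with $\pi\p\psi=\pi$. Then $\pi$ restricted to $\Im(\p\psi)$ is surjective, so $\Im(\p\psi)+U=Y$ and hence $\Im(\p\psi)\not\subseteq U$; as $\rad Y\subseteq U$ this forces $\Im(\p\psi)\not\subseteq\rad Y$, whence $\p\psi\notin J(\End_\A(Y))$ by the identity. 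Therefore $\p\notin\Rad_\A(X,Y)$, as required.

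The substance of the argument is concentrated in the identity $J(\End_\A(Y))=\End_\A(\rad Y|Y)$, which rests entirely on Proposition~\ref{pr:maxsub}; once it is available the forward direction is immediate. The only remaining delicate point is the lifting step in the converse, where the projectivity of $Y$ is genuinely used to produce $\psi$, and one must be careful that $\Im(\p\psi)$ escapes $\rad Y$ rather than merely escaping $U$.
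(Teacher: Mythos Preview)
Your argument is correct and follows essentially the same route as the paper: both proofs hinge on Proposition~\ref{pr:maxsub} to identify the maximal right ideals of $\End_\A(Y)$ with the subgroups $\End_\A(U|Y)$ for maximal $U\subseteq Y$, and both use projectivity of $Y$ to lift along the epimorphism onto $Y/U$. The paper packages the two directions into the single equivalence $\Im\p\subseteq U\iff\Im\Hom_\A(Y,\p)\subseteq\End_\A(U|Y)$ (with the identity $\Im\Hom_\A(Y,\p)=\End_\A(\Im\p|Y)$ absorbing the lifting step), whereas you separate out the identity $J(\End_\A(Y))=\End_\A(\rad Y|Y)$ first and then make the lifting explicit; but these are just different presentations of the same idea.
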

\begin{proof}
  We apply Proposition~\ref{pr:maxsub}.  For every maximal subobject
  $U\subseteq Y$, we have $\Im\p\subseteq U$ if and only
  if \[\Im\Hom_\A(Y,\p)=\End_\A(\Im\p|X)\subseteq
\End_\A(U|Y).\] Thus
$\Im\p\subseteq\rad Y$ if and only if $\Im\Hom_\A(Y,\p)\subseteq
J(\End_\A(Y))$. It follows that $\p$ belongs to $\Rad_\A(X,Y)$.
\end{proof}

\begin{rem}
The assumption on $Y$ to be projective is necessary in
Corollary~\ref{co:projrad}.  Take for instance over
$\La=k[x,y]/(x^2,y^2)$ ($k$ any field) the module $Y=\rad\La$ and let
$\p\colon X\to Y$ be the inclusion of a maximal submodule $X$. Then
$\p\in\Rad_\La(X,Y)$ but $\Im\p=X\not\subseteq\rad Y$.
\end{rem}

\subsection*{Projective presentations}

Let $\A$ be an abelian category. An exact sequence $P_1\xto{}
P_0\xto{} X\to 0$ is called a \emph{projective presentation} of $X$ if
$P_0$ and $P_1$ are projective objects.

\begin{prop}
Let $P_1\xto{\p} P_0\xto{\psi} X\to 0$ be a projective
presentation. Then $\psi$ is a projective cover of $X$ if and only if
$\p$ belongs to $\Rad_\A(P_1,P_0)$.
\end{prop}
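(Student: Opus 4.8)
The plan is to avoid any finiteness hypothesis by testing the projective cover property on endomorphisms, via Lemma~\ref{le:procov}. Since $P_0$ is projective and $\psi$ is an epimorphism, that lemma says $\psi$ is a projective cover of $X$ if and only if every $\alpha\in\End_\A(P_0)$ with $\psi\alpha=\psi$ is an isomorphism. So first I would describe this set of endomorphisms explicitly and then match the resulting condition with membership in $\Rad_\A(P_1,P_0)$.

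The key step is the identification $\psi\alpha=\psi \iff \alpha=\id_{P_0}+\phi\gamma$ for some $\gamma\colon P_0\to P_1$. Indeed $\psi\alpha=\psi$ is equivalent to $\Im(\alpha-\id_{P_0})\subseteq\Ker\psi=\Im\phi$, using exactness of the presentation. Since $P_0$ is projective, $\alpha-\id_{P_0}$ then factors through the epimorphism $P_1\to\Im\phi$, producing $\gamma$ with $\phi\gamma=\alpha-\id_{P_0}$; conversely $\psi\phi=0$ gives the reverse implication. Hence $\psi$ is a projective cover if and only if $\id_{P_0}+\phi\gamma$ is invertible in $\End_\A(P_0)$ for every $\gamma\colon P_0\to P_1$.

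It then remains to recognise this invertibility condition as the radical condition. By definition, $\phi\in\Rad_\A(P_1,P_0)$ means $\phi\gamma\in J(\End_\A(P_0))$ for all $\gamma\colon P_0\to P_1$, and here I would lean on Lemma~\ref{le:Jrad}. If $\phi\in\Rad_\A(P_1,P_0)$, then each $\phi\gamma$ lies in the radical, so $\id_{P_0}+\phi\gamma$ is invertible. Conversely, if all $\id_{P_0}+\phi\gamma$ are invertible, then for a fixed $\gamma$ and every $\delta\in\End_\A(P_0)$ the element $\id_{P_0}-(\phi\gamma)\delta=\id_{P_0}+\phi(-\gamma\delta)$ is invertible, hence has a right inverse; by the first description in Lemma~\ref{le:Jrad} this places $\phi\gamma$ in $J(\End_\A(P_0))$, so $\phi\in\Rad_\A(P_1,P_0)$. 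Combining the two reformulations yields the stated equivalence.

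The hard part will be the factorisation in the middle paragraph: one must invoke projectivity of $P_0$ to lift $\alpha-\id_{P_0}$ through $\phi$, and carefully use exactness to supply $\Ker\psi=\Im\phi$ in both directions. Once that bijection between $\{\alpha\mid\psi\alpha=\psi\}$ and $\{\id_{P_0}+\phi\gamma\}$ is in place, the radical bookkeeping is a routine application of Lemma~\ref{le:Jrad}, amounting only to switching between its ``right inverse'' and ``invertible'' descriptions. Notably, this route needs no assumption that $P_0$ or $X$ be finitely generated.
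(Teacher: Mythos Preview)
Your argument is correct and takes a genuinely different route from the paper. The paper proceeds by projectivisation: it sets $P=P_0\oplus P_1$, $\Gamma=\End_\A(P)$, and applies $F=\Hom_\A(P,-)$ to transport the presentation into $\mod\Gamma$; there $FP_0$ is automatically finitely generated, so Lemma~\ref{le:radess} converts the projective-cover property into the inclusion $\Ker F\psi\subseteq\rad FP_0$, which Corollary~\ref{co:projrad} then identifies with $F\phi\in\Rad_\Gamma(FP_1,FP_0)$. You instead stay inside $\A$ and argue directly from the endomorphism criterion of Lemma~\ref{le:procov}: parametrising $\{\alpha\mid\psi\alpha=\psi\}$ as $\{\id_{P_0}+\phi\gamma\}$ via projectivity of $P_0$ and exactness, you reduce to the invertibility of all $\id_{P_0}+\phi\gamma$, which is exactly the radical condition by Lemma~\ref{le:Jrad}. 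Your proof is shorter, uses only Lemmas~\ref{le:procov} and~\ref{le:Jrad}, and avoids the detour through module categories and the finiteness-based Lemma~\ref{le:radess}; the paper's route, on the other hand, reinforces the projectivisation philosophy that underlies the whole exposition and explains why Corollary~\ref{co:projrad} was developed.
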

\begin{proof}
Let $P=P_0\oplus P_1$ and $\Ga=\End_\A(P)$. Denote by $\C$ the
smallest full additive subcategory of $\A$ containing $P$ and closed
under taking cokernels. Using Proposition~\ref{pr:proj}, it is not
hard to verify that $F=\Hom_\A(P,-)\colon\A\to\Mod\Ga$ induces an
equivalence $\C\xto{\sim}\mod\Ga$, where $\mod\Ga$ denotes the
category of finitely presented $\Ga$-modules.

It follows from Lemma~\ref{le:procov} that $\psi$ is a projective
cover of $X$ if and only if $F\psi$ is a projective cover of $FX$.
The module $FP_0$ is finitely generated and therefore $F\psi$ is a
projective cover if and only if $\Ker F\psi\subseteq\rad FP_0$, by
Lemma~\ref{le:radess}. Finally, Corollary~\ref{co:projrad} implies
that $\Ker F\psi\subseteq\rad FP_0$ if and only if $F\p$ belongs to
$\Rad_\Ga(FP_0,FP_1)$. It remains to note that $F$ induces a
bijection $\Rad_\A(P_0,P_1)\cong\Rad_\Ga(FP_0,FP_1)$.
\end{proof}

\section{Krull-Schmidt categories}

\subsection*{Krull-Schmidt categories}

An additive category is called \emph{Krull-Schmidt category} if every
object decomposes into a finite direct sum of objects having local
endomorphism rings.

\begin{prop}\label{pr:semiperfect}
For a ring $\La$ the following are equivalent.
\begin{enumerate}
\item The category of finitely generated projective
$\La$-modules is a Krull-Schmidt category.
\item The module $\La$ admits a decomposition
$\La=P_1\oplus\ldots\oplus P_r$ such that each $P_i$ has a local
endomorphism ring.
\item Every simple $\La$-module admits a projective cover.
\item Every finitely generated $\La$-module admits a projective cover.
\end{enumerate}
\end{prop}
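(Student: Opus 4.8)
The plan is to establish the cycle of implications $(1)\Rightarrow(2)\Rightarrow(3)\Rightarrow(4)\Rightarrow(1)$, exploiting the projectivisation dictionary between $\proj\La=\add\La$ and the module $\La$ over $\Ga=\End_\La(\La)\cong\La$. The implication $(1)\Rightarrow(2)$ is essentially a restatement: since $\La$ is itself a finitely generated projective module, the Krull--Schmidt hypothesis gives a decomposition $\La=P_1\oplus\dots\oplus P_r$ into objects with local endomorphism rings. For $(2)\Rightarrow(3)$, I would take a simple module $S$ and a surjection $\La\to S$; composing with the projections $P_i\to\La\to S$, at least one restriction $P_i\to S$ is nonzero, hence surjective since $S$ is simple. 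Because $\End_\La(P_i)$ is local, Lemma~\ref{le:covsimple} (in the direction $(3)\Rightarrow(1)$ there) shows this map $P_i\to S$ is a projective cover, giving $(3)$.

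The heart of the argument is $(3)\Rightarrow(4)$, which I expect to be the main obstacle. Given a finitely generated module $X$, I would first build a projective cover of its top $X/\rad X$ and then lift. The idea is that $X/\rad X$ is semisimple; using $(3)$ I would assemble a projective cover $\p\colon P\to X/\rad X$ as a finite direct sum of projective covers of the simple summands, invoking Lemma~\ref{le:sumess} to see that a direct sum of essential epimorphisms is again essential. The subtle points are that $X/\rad X$ must actually be a finite direct sum of simples---this needs $X$ finitely generated and some control on $\rad X$---and that one can lift $\p$ along $X\to X/\rad X$ to a map $\bar\p\colon P\to X$ which is still surjective and essential. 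Surjectivity of $\bar\p$ follows from Nakayama (Lemma~\ref{le:nak}): the image of $\bar\p$ together with $\rad X$ is all of $X$, so $\Im\bar\p=X$. Essentiality then follows from Lemma~\ref{le:radess}(2) once one checks $\Ker\bar\p\subseteq\rad P$, which reduces to comparing $\rad P$ with the preimage of $\rad X$ under the cover.

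Finally, $(4)\Rightarrow(1)$ proceeds by showing that every indecomposable finitely generated projective $P$ has a local endomorphism ring, which by definition yields the Krull--Schmidt property for $\proj\La$. Here I would apply $(4)$ to the simple quotient $P/\rad P$: a projective cover $Q\to P/\rad P$ exists, and by the uniqueness of projective covers (Corollary~\ref{co:cov}) together with the fact that $P\to P/\rad P$ is itself essential (Lemma~\ref{le:radess}(2), since $\rad P\subseteq\rad P$ and $P$ is finitely generated), one identifies $P$ with a projective cover of a simple module. Lemma~\ref{le:covsimple} then forces $\End_\La(P)$ to be local. Decomposing an arbitrary finitely generated projective into indecomposables---possible because finitely generated projectives over $\La$ decompose into finitely many indecomposable summands---completes the proof. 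The delicate ingredient throughout is ensuring that $P/\rad P$ is genuinely simple when $P$ is indecomposable, which I expect to lean on $\rad P$ being the unique maximal subobject; this links back to Lemma~\ref{le:covsimple}(2) and is where I would spend the most care.
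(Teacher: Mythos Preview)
Your cycle $(1)\Rightarrow(2)\Rightarrow(3)\Rightarrow(4)\Rightarrow(1)$ contains two genuine gaps, and the paper's logical arrangement is different precisely in order to avoid them. In your step $(3)\Rightarrow(4)$ you assert that $X/\rad X$ is a finite direct sum of simples and then flag this as ``subtle''. It is not merely subtle: for $\La=\bbZ$ one has $\rad\La=0$ while $\La$ is not semisimple, so some use of hypothesis $(3)$ is required. What is really needed is that $\La/J(\La)$ is semisimple, and deriving this from $(3)$ is exactly the content of the paper's implication $(3)\Rightarrow(1)$. There one shows directly, for any finitely generated projective $P$, that $P/\rad P$ is semisimple: if the socle $P'/\rad P$ were proper, choose a maximal $U\supseteq P'$; the simple $P/U$ has a projective cover $Q\to P/U$ by $(3)$, and lifting in both directions yields a split embedding $Q\hookrightarrow P$, whence $P/U\cong Q/\rad Q$ embeds as a summand of $P/\rad P$, contradicting $P'\subseteq U$. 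Only after this does the paper derive $(4)$, and it explicitly uses both $(1)$ and $(3)$: one decomposes a presentation $P\twoheadrightarrow X$ via $(1)$ to see that $X/\rad X$ is a quotient of the semisimple module $P/\rad P$.

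Your step $(4)\Rightarrow(1)$ is more seriously problematic. First, you assume that an arbitrary finitely generated projective decomposes into finitely many indecomposables; over a ring with no chain conditions this is not automatic and is part of what has to be proved. Second, for an indecomposable projective $P$ you want to apply Lemma~\ref{le:covsimple} to $P\to P/\rad P$, but that lemma requires the target to be simple; you acknowledge that ``$P/\rad P$ genuinely simple'' is the delicate point, yet by Lemma~\ref{le:covsimple} this is \emph{equivalent} to $\End_\La(P)$ being local, which is what you are trying to prove---so the argument is circular as stated. The paper sidesteps both issues by taking the trivial implication $(4)\Rightarrow(3)$ and then the direct argument $(3)\Rightarrow(1)$ described above; it never needs to decompose a projective into indecomposables before it already knows the endomorphism rings of the pieces are local.
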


A ring is \emph{semi-perfect} if it satisfies the equivalent
conditions in the preceding proposition.

\begin{proof}
(1) $\Rightarrow$ (2): Clear.

(2) $\Rightarrow$ (3): Let $S$ be a simple $\La$-module. Then we have
a non-zero morphism $\La\to S$ and therefore a non-zero morphism
$\p\colon P\to S$ for some indecomposable direct summand $P$ of
$\La$. The morphism $\p$ is a projective cover by
Lemma~\ref{le:covsimple}, because $\End_\La(P)$ is local.

(3) $\Rightarrow$ (1): Let $P$ be a finitely generated projective
$\La$-module. We claim that $P/\rad P$ is semi-simple. To prove this,
let $P'/\rad P\subseteq P/\rad P$ be the sum of all simple
submodules. If $P'\neq P$, there is a maximal submodule $U\subseteq P$
containing $P'$, and the simple module $P/U$ admits a projective cover
$\pi\colon Q\to P/U$. The morphism $P\to P/U$ factors through $Q\to
P/U$ via a morphism $\p\colon P\to Q$. Analogously, there is a
morphism $\psi\colon Q\to P$, and the composite $\p\psi$ is an
isomorphism since $\pi$ is a projective cover, by
Lemma~\ref{le:procov}.  Observe that $\Ker \pi=\rad Q$, by
Lemma~\ref{le:covsimple}. Thus $P/U\cong Q/\rad Q$, and therefore
$\psi$ induces a right inverse for the canonical morphism $P/\rad P\to
P/U$. This contradicts the property of $P'/\rad P$ to contain all
simple submodules of $P/\rad P$. It follows that $P/\rad P$ is
semi-simple. Let $P/\rad P=\bigoplus_i S_i$ be a decomposition into
finitely many simple modules and choose a projective cover $P_i\to
S_i$ for each $i$. Then $P\cong \bigoplus_i P_i$, since $P\to P/\rad
P$ and $ \bigoplus_i P_i\to \bigoplus_i S_i$ are both projective
covers. It remains to observe that each $P_i$ is indecomposable with a
local endomorphism ring, by Lemma~\ref{le:covsimple}. 

(1) \& (3) $\Rightarrow$ (4): The assumption implies that every finite
sum of simple $\La$-modules admits a projective cover; see
Lemma~\ref{le:sumess}.  Now let $X$ be a finitely generated $\La$-module
and choose an epimorphism $\p\colon P\to X$ with $P$ finitely
generated projective. Let $P=\bigoplus_{i=1}^nP_i$ be a decomposition
into indecomposable modules. Then
\[P/\rad P=\bigoplus_{i=1}^nP_i/\rad P_i\] is a finite sum of simple
$\La$-modules by Lemma~\ref{le:covsimple} since each $P_i$ has a local
endomorphism ring. The epimorphism $\p$ induces an epimorphism $P/\rad
P\to X/\rad X$ and therefore $X/\rad X$ decomposes into finitely many
simple modules.  There exists a projective cover $Q\to X/\rad X$ and
this factors through the canonical morphism $\pi\colon X\to X/\rad X$
via a morphism $\psi \colon Q\to X$. The morphism $\psi$ is an
epimorphism because $\pi$ is essential by Lemma~\ref{le:radess}, and
Lemma~\ref{le:compess} implies that $\psi$ is essential.

(4) $\Rightarrow$ (3): Clear.
\end{proof}

\subsection*{Direct sum decompositions}

The uniqueness of direct sum decompositions in Krull-Schmidt
categories can be derived from the existence and uniqueness of
projective covers over semi-perfect rings.

\begin{thm}\label{th:unique}
Let $X$ be an object of an additive category and suppose there are two
decompositions \[X_1\oplus\ldots\oplus X_r=X= Y_1\oplus\ldots \oplus
Y_s\] into objects with local endomorphism rings.  Then $r=s$ and and
there exists a permutation $\pi$ such that $X_i\cong Y_{\pi(i)}$ for
$1\le i\le r$.
\end{thm}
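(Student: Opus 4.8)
The plan is to transport the problem into a module category and there exploit the uniqueness of projective covers. Set $\Gamma=\End_\A(X)$. Since each $X_i$ and each $Y_j$ is a direct summand of $X$, all of them lie in $\add X$, and by Proposition~\ref{pr:proj} the functor $F=\Hom_\A(X,-)$ embeds $\add X$ fully faithfully into $\proj\Gamma$. Writing $P_i=FX_i$ and $Q_j=FY_j$, I obtain finitely generated projective $\Gamma$-modules with $\bigoplus_i P_i\cong\Gamma\cong\bigoplus_j Q_j$, since $F$ is additive and $FX=\Gamma$. Full faithfulness gives $\End_\Gamma(P_i)\cong\End_\A(X_i)$ and $\End_\Gamma(Q_j)\cong\End_\A(Y_j)$, so these endomorphism rings are again local, and it also means $F$ reflects isomorphisms. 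Hence it suffices to produce a permutation $\pi$ with $P_i\cong Q_{\pi(i)}$ as $\Gamma$-modules.

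Next I record that each summand is a projective cover of a simple module. A local ring has no idempotents other than $0$ and $1$, so each $P_i$ is indecomposable; being finitely generated and nonzero it has a maximal submodule by Lemma~\ref{le:fgmax}, hence a simple quotient. Applying Lemma~\ref{le:covsimple} with the hypothesis that $\End_\Gamma(P_i)$ is local, this simple quotient is $S_i:=P_i/\rad P_i$ and the canonical map $P_i\to S_i$ is a projective cover; likewise $Q_j\to T_j:=Q_j/\rad Q_j$ is a projective cover of a simple module. By Lemma~\ref{le:sumess} the sum $\Gamma=\bigoplus_i P_i\to\bigoplus_i S_i$ is an essential epimorphism, and because $\rad$ is a subfunctor of the identity its kernel is exactly $\rad\Gamma$; thus $\Gamma/\rad\Gamma\cong\bigoplus_i S_i$, and symmetrically $\Gamma/\rad\Gamma\cong\bigoplus_j T_j$.

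The crux is then the comparison of the two semisimple decompositions $\bigoplus_i S_i\cong\bigoplus_j T_j$ of $\Gamma/\rad\Gamma$, and I expect this to be the main obstacle: it is precisely here that genuine uniqueness enters, and it is not formally contained in the material cited above. The clean way is a counting argument. For a simple $\Gamma$-module $S$ with division ring $D=\End_\Gamma(S)$, Schur's lemma gives $\Hom_\Gamma(S,S')\cong D$ or $0$ according as $S'\cong S$ or not, so that $\dim_D\Hom_\Gamma(S,\bigoplus_i S_i)$ equals the number of indices $i$ with $S_i\cong S$, and likewise for the $T_j$. Since the two sides compute the same $D$-space and dimension over a division ring is well defined, every isomorphism type of simple module occurs with the same multiplicity on both sides; in particular $r=s$ and there is a permutation $\pi$ with $S_i\cong T_{\pi(i)}$.

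Finally I lift this matching. For each $i$ the maps $P_i\to S_i$ and $Q_{\pi(i)}\to T_{\pi(i)}\cong S_i$ are two projective covers of the same simple module, so Corollary~\ref{co:cov} yields an isomorphism $P_i\cong Q_{\pi(i)}$. Transporting back along the fully faithful functor $F$ gives $X_i\cong Y_{\pi(i)}$ for $1\le i\le r$, which is the assertion. The only ingredient beyond the cited results is the uniqueness of the decomposition of a semisimple module used above, resting on Schur's lemma and the invariance of dimension over a division ring.
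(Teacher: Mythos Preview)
Your argument is correct and follows essentially the same route as the paper: projectivise via $\Hom_\A(X,-)$, use Lemma~\ref{le:covsimple} to see that each indecomposable summand is a projective cover of the simple $P_i/\rad P_i$, pass to the semisimple quotient $\Gamma/\rad\Gamma$ and invoke uniqueness of decompositions there, then lift back via Corollary~\ref{co:cov}. The only cosmetic difference is that you justify the semisimple uniqueness by a Schur-lemma dimension count, whereas the paper simply remarks that it is ``easily proved by induction on the number of summands''.
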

\begin{proof}
Let $\A=\add X$ and identify $\A$ via $\Hom_\A(X,-)$ with a full
subcategory of the category of finitely generated projective modules
over $\End_\A(X)$; see Proposition~\ref{pr:proj}. Thus we may assume
that $X$ is a finitely generated projective module over a semi-perfect
ring.

It follows from Lemma~\ref{le:covsimple} that for every index $i$ the
radical $\rad X_i$ is a maximal submodule of $X_i$ and that the
canonical morphism $X_i\to X_i/\rad X_i$ is a projective cover. Thus
$X_i\cong Y_j$ if and only if $X_i/\rad X_i\cong Y_j/\rad Y_j$ for
every pair $i,j$, by Corollary~\ref{co:cov}.  We have
\[(X_1/\rad X_1)\oplus\ldots\oplus (X_r/\rad X_r)=X/\rad X= (Y_1/\rad
Y_1)\oplus\ldots\oplus (Y_s/\rad Y_s)\] and the assertion now follows
from the uniqueness of the decomposition of a semi-simple module into
simple modules (which is easily proved by induction on the number of
summands).
\end{proof}

\begin{cor}\label{co:exchange}
  Let $X$ be an object of a Krull-Schmidt category and suppose there
  are two decompositions \[X_1\oplus\ldots\oplus X_n=X= X' \oplus
  X''\] such that each $X_i$ is indecomposable.  Then there exists an
  integer $t\le n$ such that $X=X_1\oplus\ldots\oplus X_t\oplus X'$
  after reindexing the $X_i$.
\end{cor}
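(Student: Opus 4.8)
The plan is to mimic the proof of Theorem~\ref{th:unique}. Using Proposition~\ref{pr:proj} I would identify $\add X$ via $\Hom_\A(X,-)$ with a full subcategory of $\proj\Gamma$, where $\Gamma=\End_\A(X)$, and check as in Theorem~\ref{th:unique} that $\Gamma$ is semi-perfect; thus I may assume $X$ is a finitely generated projective module over a semi-perfect ring and that the two given decompositions are decompositions of $\Gamma$-modules. Note first that in a Krull-Schmidt category an indecomposable object has a local endomorphism ring, since its Krull-Schmidt decomposition into objects with local endomorphism rings must be trivial. Hence each $X_i\to X_i/\rad X_i$ is a projective cover onto a \emph{simple} module by Lemma~\ref{le:covsimple}, while $X'$, being a direct summand of the finitely generated projective $X$, is itself finitely generated projective, so $X'\to X'/\rad X'$ is a projective cover by Lemma~\ref{le:radess}.

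Passing to the radical quotient $\bar X=X/\rad X$, which commutes with finite direct sums, I get a semisimple module $\bar X=\bigoplus_{i=1}^n\bar X_i=\bar X'\oplus\bar X''$ with each $\bar X_i=X_i/\rad X_i$ simple. Here I would invoke the elementary fact that in a semisimple module a given submodule can be complemented by a subfamily of any prescribed family of simple summands: starting from $\bar X'$ and successively adjoining those $\bar X_i$ not yet contained in the sum (simplicity of $\bar X_i$ keeps the sum direct), the process terminates for length reasons and yields, after reindexing the $X_i$, an integer $t\le n$ with $\bar X=\bar X'\oplus\bar X_1\oplus\cdots\oplus\bar X_t$.

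Finally I would lift this decomposition. Set $P=X'\oplus X_1\oplus\cdots\oplus X_t$ and let $\psi\colon P\to X$ have as components the inclusions of $X'$ and of the $X_i$ as summands of $X$. By functoriality of the radical the composite $P\to X\to\bar X$ agrees with the projective cover $P\to\bar P\xto{\sim}\bar X$ supplied by Lemma~\ref{le:sumess} followed by the isomorphism of the previous paragraph; in particular it is an essential epimorphism. Since $X\to\bar X$ is essential, the relation $\Im\psi+\rad X=X$ forces $\Im\psi=X$, so $\psi$ is an epimorphism, and then Lemma~\ref{le:compess} upgrades this to $\psi$ being essential. Thus $\psi$ is a projective cover of $X$; as $\id_X$ is another projective cover of the projective object $X$, uniqueness (Corollary~\ref{co:cov}) makes $\psi$ an isomorphism, which is precisely the assertion $X=X_1\oplus\cdots\oplus X_t\oplus X'$.

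The only delicate point is the verification that $\psi$ is an essential epimorphism. One must first establish that $\psi$ is surjective—this is exactly where essentiality of $X\to\bar X$ (equivalently Nakayama's Lemma~\ref{le:nak}) is used—and only afterwards may Lemma~\ref{le:compess} be applied to promote surjectivity to essentiality, since that lemma presupposes that the factors are epimorphisms. Everything else, namely the reduction to modules over a semi-perfect ring and the semisimple complement argument, is routine.
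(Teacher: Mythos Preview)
Your argument is correct, but it takes a substantially different route from the paper's proof. The paper simply decomposes $X'=Y_1\oplus\cdots\oplus Y_s$ and $X''=Z_1\oplus\cdots\oplus Z_t$ into indecomposables and invokes Theorem~\ref{th:unique} directly: uniqueness forces $n=s+t$ and, after reindexing, $X''\cong X_1\oplus\cdots\oplus X_t$; composing the splitting $X=X'\oplus X''$ with this isomorphism finishes the proof in two lines. You instead redo the machinery of Theorem~\ref{th:unique} by hand---reduction to $\proj\Gamma$ for semi-perfect $\Gamma$, passage to the semisimple quotient $X/\rad X$, an exchange argument there, and a lift via projective covers (Lemmas~\ref{le:sumess}, \ref{le:radess}, \ref{le:compess} and Corollary~\ref{co:cov}). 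What your approach buys is a slightly sharper conclusion: because your map $\psi$ is built from the \emph{given} inclusions of $X'$ and of the $X_i$ into $X$, the isomorphism $\psi$ yields an \emph{internal} decomposition $X=X'\oplus X_1\oplus\cdots\oplus X_t$ with the original $X'$ sitting as a summand, whereas the paper's argument only produces such a decomposition up to an automorphism of $X$. The paper's proof, on the other hand, is far shorter and makes transparent that the corollary is a formal consequence of Krull--Remak--Schmidt uniqueness rather than requiring a fresh appeal to the projective-cover toolkit.
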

\begin{proof}
  Let $X'=Y_1\oplus\ldots \oplus Y_s$ and $X''=Z_1\oplus\ldots \oplus
  Z_t$ be decompositions into indecomposable objects.  It follows from
  the uniqueness of these decompositions that $n=s+t$ and that
  $X''\cong X_1\oplus\ldots\oplus X_t$ after some reindexing of the
  $X_i$. Composing the decomposition $X= X' \oplus X''$ with that
  isomorphism yields the assertion.
\end{proof}

\begin{cor}\label{co:semi-perfect}
  An additive category is a Krull-Schmidt category if and only if it
  has split idempotents and the endomorphism ring of every object is
  semi-perfect.
\end{cor}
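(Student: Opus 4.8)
The plan is to reduce both implications to the projectivisation functor $\Hom_\A(X,-)$ together with the module-theoretic characterisations of semi-perfect rings collected in Proposition~\ref{pr:semiperfect}. For a fixed object $X$ with $\Ga=\End_\A(X)$, Proposition~\ref{pr:proj} supplies a fully faithful functor $F=\Hom_\A(X,-)\colon\add X\to\proj\Ga$ carrying $X$ to the regular module $\Ga$, and $F$ is an equivalence as soon as $\A$ has split idempotents. Both directions will then be obtained by transporting direct sum decompositions of $X$ in $\add X$ to decompositions of $\Ga$ in $\proj\Ga$ and back.

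For the implication from right to left, assume $\A$ has split idempotents and that $\Ga=\End_\A(X)$ is semi-perfect for every $X$. Then $F$ is an equivalence $\add X\xto{\sim}\proj\Ga$ by Proposition~\ref{pr:proj}. Since $\Ga$ is semi-perfect, Proposition~\ref{pr:semiperfect} says that $\proj\Ga$ is a Krull-Schmidt category, so the regular module $\Ga=FX$ decomposes into finitely many objects with local endomorphism rings. Transporting this decomposition back along $F$ shows that $X$ itself decomposes into objects with local endomorphism rings inside $\add X\subseteq\A$. As $X$ was arbitrary, $\A$ is a Krull-Schmidt category.

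For the converse, assume $\A$ is a Krull-Schmidt category and fix $X$ with a decomposition $X=X_1\oplus\ldots\oplus X_r$ into objects with local endomorphism rings. The semi-perfectness of $\Ga=\End_\A(X)$ comes for free and does \emph{not} require split idempotents: applying $F$ yields $\Ga=\bigoplus_i FX_i$, where each $FX_i$ is a direct summand of $\Ga$, hence finitely generated projective, and $\End_\Ga(FX_i)\cong\End_\A(X_i)$ is local because $F$ is fully faithful. This is exactly condition (2) of Proposition~\ref{pr:semiperfect}, so $\Ga$ is semi-perfect.

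It remains to establish split idempotents, which I expect to be the main obstacle. The idea is to upgrade the fully faithful $F\colon\add X\to\proj\Ga$ to an equivalence by showing it essentially surjective; then $\add X$ inherits split idempotents from $\proj\Ga$, and since every idempotent of $\A$ is an endomorphism of some object $X$ and thus lives in $\add X$, it splits in $\A$. Because $\Ga$ is semi-perfect, $\proj\Ga$ is Krull-Schmidt, so any finitely generated projective $\Ga$-module is a finite sum of indecomposables with local endomorphism rings. Each such indecomposable is a summand of some $\Ga^n=\bigoplus_i (FX_i)^n$, hence isomorphic to one of the $FX_i$ by the uniqueness of decompositions (Theorem~\ref{th:unique}, or equivalently Corollary~\ref{co:exchange}). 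Consequently every object of $\proj\Ga$ is isomorphic to $F(\bigoplus_j X_{i_j})$ and lies in the essential image of $F$, so $F$ is an equivalence. The delicate point is precisely this essential-surjectivity step: it is where the Krull-Schmidt hypothesis, via the uniqueness of decompositions, is genuinely needed to recognise every finitely generated projective $\Ga$-module as coming from $\add X$.
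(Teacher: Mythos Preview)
Your argument is correct and follows essentially the same route as the paper: both directions are handled via the projectivisation functor $F\colon\add X\to\proj\Ga$ together with Proposition~\ref{pr:semiperfect}, and the only nontrivial point---that a Krull-Schmidt category has split idempotents---is obtained by showing $F$ is essentially surjective using the uniqueness of decompositions (the paper cites Corollary~\ref{co:exchange}, you cite Theorem~\ref{th:unique}/Corollary~\ref{co:exchange}). You have simply unpacked the paper's two-sentence proof into its constituent steps.
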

\begin{proof} 
  The assertion follows from Proposition~\ref{pr:semiperfect} once we
  know that a Krull-Schmidt category has split idempotents. But this
  is clear since there is an equivalence $\add X\xto{\sim}\proj\Ga$
  for $\Ga=\End_\A(X)$, thanks to Proposition~\ref{pr:proj} and
  Corollary~\ref{co:exchange}.
\end{proof}

Let $\A$ be a Krull-Schmidt category and let
$X=X_1\oplus\ldots\oplus X_r$ and $Y=Y_1\oplus\ldots\oplus Y_s$ be
decompositions of two objects $X,Y$ into indecomposable objects.  Then
we have
\[\Rad_\A(X,Y)=\bigoplus_{i,j}\Rad_\A(X_i,Y_j)\] and
$\Rad_\A(X_i,Y_j)$ equals the set of non-invertible morphisms $X_i\to
Y_j$ for each pair $i,j$.

\begin{exm}
  The category of finitely generated torsion-free abelian groups
  admits unique decompositions into indecomposable objects. However,
  the unique indecomposable object $\bbZ$ does not have a local
  endomorphism ring.
\end{exm}

\section{Chain conditions}

\subsection*{The bi-chain condition}
A \emph{bi-chain} in a category is a sequence of morphisms
$X_n\xto{\a_n} X_{n+1} \xto{\b_n} X_{n}$ ($n\ge 0$) such that $\a_n$
is an epimorphism and $\b_n$ is a monomorphism for all integers $n\ge
0$. The object $X$ satisfies the \emph{bi-chain condition} if for
every bi-chain $X_n\xto{\a_n} X_{n+1} \xto{\b_n} X_{n}$ ($n\ge 0$)
with $X=X_0$ there exists an integer $n_0$ such that $a_n$ and $\b_n$
are invertible for all $n\ge n_0$.

\subsection*{Finite length objects}
An object $X$ of an abelian category has \emph{finite length} if there
exists a finite chain of subobjects
\[0=X_0\subseteq X_1\subseteq \ldots \subseteq X_{n-1}\subseteq
X_n=X\] such that each quotient $X_i/X_{i-1}$ is a simple object. Note
that $X$ has finite length if and only if $X$ is both \emph{artinian}
(i.e.\ it satisfies the descending chain condition on subobjects) and
\emph{noetherian} (i.e.\ it satisfies the ascending chain condition on
subobjects).

\begin{lem}
An object of finite length satisfies the bi-chain condition.
\end{lem}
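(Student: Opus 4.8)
The plan is to split the statement into two independent stabilisation arguments: one for the monomorphisms $\beta_n$ using the descending chain condition, and one for the epimorphisms $\alpha_n$ using the ascending chain condition. Here I use the characterisation noted just above, namely that a finite length object is both artinian and noetherian.

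First I would handle the $\beta_n$. For each $n\ge 0$ the composite $b_n:=\beta_0\beta_1\cdots\beta_{n-1}\colon X_n\to X_0=X$ is a monomorphism, being a composite of monomorphisms, and so represents a subobject $U_n:=\Im b_n\subseteq X$, with $U_0=X$. Since $b_{n+1}=b_n\beta_n$, we have $U_{n+1}\subseteq U_n$, so $(U_n)_{n\ge 0}$ is a descending chain of subobjects of $X$. As $X$ is artinian this chain stabilises: there is $m$ with $U_n=U_{n+1}$ for all $n\ge m$. Writing $\beta_n$ through its image shows $U_{n+1}=b_n(\Im\beta_n)$ while $U_n=b_n(X_n)$ inside $X$; since $b_n$ is a monomorphism it induces an injection on subobjects, so $\Im\beta_n=X_n$, that is, $\beta_n$ is an epimorphism. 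Being also a monomorphism, $\beta_n$ is an isomorphism for every $n\ge m$.

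Dually I would treat the $\alpha_n$ with the ascending chain condition. The composite $a_n:=\alpha_{n-1}\cdots\alpha_1\alpha_0\colon X=X_0\to X_n$ is an epimorphism, and its kernel $K_n:=\Ker a_n\subseteq X$ satisfies $K_n\subseteq K_{n+1}$ because $a_{n+1}=\alpha_n a_n$. As $X$ is noetherian the ascending chain $(K_n)_{n\ge0}$ stabilises at some $m'$. Now $K_{n+1}=\Ker(\alpha_n a_n)=a_n^{-1}(\Ker\alpha_n)$ while $K_n=a_n^{-1}(0)$; applying the epimorphism $a_n$ to the equality $K_n=K_{n+1}$ and using $a_n(a_n^{-1}(V))=V$ gives $\Ker\alpha_n=0$, so $\alpha_n$ is a monomorphism, hence an isomorphism for $n\ge m'$. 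Taking $n_0=\max(m,m')$ makes both $\alpha_n$ and $\beta_n$ invertible for all $n\ge n_0$, which is exactly the bi-chain condition.

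The only delicate point is the elementary subobject calculus in an abelian category: that a monomorphism induces an injective map on subobjects (used for the $\beta_n$) and that an epimorphism $f$ satisfies $f(f^{-1}(V))=V$ for every subobject $V$ of its target (used for the $\alpha_n$). Both follow from the canonical factorisation of a morphism, so I expect no genuine obstacle, only routine verification. Alternatively, one can collapse both steps into one by invoking a well-defined additive length function $\ell$: each $\alpha_n$ and each $\beta_n$ forces $\ell(X_{n+1})\le\ell(X_n)$, so the sequence $(\ell(X_n))_{n\ge0}$ of non-negative integers is eventually constant, and an epimorphism or a monomorphism between objects of equal finite length is necessarily an isomorphism.
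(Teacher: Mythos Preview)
Your argument is correct and follows essentially the same route as the paper: you form the descending chain $\Im(\beta_0\cdots\beta_{n-1})\subseteq X$ and the ascending chain $\Ker(\alpha_{n-1}\cdots\alpha_0)\subseteq X$, use that $X$ is artinian and noetherian to make both stabilise, and conclude that $\beta_n$ and $\alpha_n$ are eventually invertible. The paper states this in one sentence without spelling out the subobject calculus you carefully justify, but the strategy is identical.
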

\begin{proof}
  Let $X$ be an object of finite length and $X_n\xto{\a_n} X_{n+1}
  \xto{\b_n} X_{n}$ ($n\ge 0)$ a bi-chain with $X=X_0$. Then the
  subobjects $\Ker (\a_n\ldots\a_1\a_0)\subseteq X$ yield an ascending
  chain and the subobjects $\Im (\b_0\b_1\ldots\b_n)\subseteq X$ yield
  a descending chain. If these chains terminate, then $\a_n$ and
  $\b_n$ are invertible for large enough $n$.
\end{proof}

An additive category $\A$ is \emph{Hom-finite} if there exists a
commutative ring $k$ such that $\Hom_\A(X,Y)$ is a $k$-module of
finite length for all objects $X,Y$ and the composition maps are
$k$-bilinear.

\begin{lem}
  An object of a Hom-finite abelian category satisfies the bi-chain
  condition.
\end{lem}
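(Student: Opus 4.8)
The plan is to reduce the statement to two dual assertions: that the epimorphisms $\a_n$ and the monomorphisms $\b_n$ are each eventually invertible. Since in an abelian category a morphism that is both a monomorphism and an epimorphism is an isomorphism, it suffices to show that each $\b_n$ is eventually an epimorphism and, dually, that each $\a_n$ is eventually a monomorphism. The essential difficulty is that, unlike the finite length case, a Hom-finite abelian category need not satisfy any chain condition on subobjects; coherent sheaves on the projective line give an example where $\Im(\b_0\cdots\b_{n-1})$ forms an infinite strictly descending chain of subobjects. Hence I cannot argue with the subobject chains used in the previous lemma, and must instead encode all the finiteness inside the finite-length $k$-modules $\Hom_\A(-,-)$.

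To this end I would form the composites $f_n=\a_{n-1}\cdots\a_0\colon X_0\to X_n$, which is an epimorphism, and $g_n=\b_0\cdots\b_{n-1}\colon X_n\to X_0$, which is a monomorphism (with $f_0=g_0=\id_{X_0}$). The key device is the chain of $k$-submodules
\[A_n:=g_n\cdot\Hom_\A(X_0,X_n)\subseteq\End_\A(X_0).\]
This chain is descending, since $g_{n+1}\Hom_\A(X_0,X_{n+1})=g_n\b_n\Hom_\A(X_0,X_{n+1})\subseteq g_n\Hom_\A(X_0,X_n)$. Because $\End_\A(X_0)$ has finite length over $k$, the chain stabilises: there is $n_0$ with $A_n=A_{n+1}$ for all $n\ge n_0$.

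Now I would exploit stabilisation at the distinguished element $f_n\in\Hom_\A(X_0,X_n)$. For $n\ge n_0$ the element $g_nf_n$ lies in $A_{n+1}=g_n\b_n\Hom_\A(X_0,X_{n+1})$, so $g_nf_n=g_n\b_n\theta$ for some $\theta$; cancelling the monomorphism $g_n$ gives $f_n=\b_n\theta$. As $f_n$ is an epimorphism that factors through $\b_n$, the morphism $\b_n$ is an epimorphism, hence an isomorphism. This settles the claim for the $\b_n$. The statement for the $\a_n$ follows by the dual argument, applied to the descending chain $B_n:=\Hom_\A(X_n,X_0)\cdot f_n\subseteq\End_\A(X_0)$: its stabilisation, tested at $g_n$, yields $g_n=\theta'\a_n$ with $g_n$ a monomorphism, forcing $\a_n$ to be a monomorphism and hence an isomorphism. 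Equivalently, one obtains the $\a_n$-part by applying the $\b_n$-part in the opposite category $\A^{\op}$, which is again Hom-finite and abelian. Taking $n$ beyond both stabilisation points produces an $n_0$ past which all $\a_n$ and $\b_n$ are invertible.

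The main obstacle is conceptual rather than computational: recognising that the finiteness must be harvested from the Hom-modules, and choosing the chain $A_n$ so that its stabilisation, tested against the canonical epimorphism $f_n$, forces $\b_n$ to be surjective. Once the correct chain is identified, the remaining steps are only the routine epi/mono cancellation facts and the observation that a mono-epi is an isomorphism.
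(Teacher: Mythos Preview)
Your argument is correct. Both your proof and the paper's extract the needed finiteness from the $k$-module $\End_\A(X_0)$ (resp.\ $\End_\A(X_n)$) rather than from chains of subobjects, and both finish by observing that in an abelian category a morphism which is simultaneously mono and epi is invertible.

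The organisation is different, however. The paper treats $\a_n$ and $\b_n$ in a single stroke: the assignment $\phi\mapsto\b_n\phi\a_n$ is a $k$-linear monomorphism $\End_\A(X_{n+1})\to\End_\A(X_n)$, so the lengths $\ell_k(\End_\A(X_n))$ form a non-increasing sequence of non-negative integers; once it stabilises the monomorphism is an isomorphism, so $\id_{X_n}=\b_n\phi\a_n$ for some $\phi$, which makes $\a_n$ split mono and $\b_n$ split epi simultaneously. Your approach instead works inside the fixed module $\End_\A(X_0)$ with genuinely nested chains $A_n$ and $B_n$, testing stabilisation against the distinguished elements $g_nf_n$; this is slightly longer because it handles $\a_n$ and $\b_n$ separately (or by passing to $\A^{\op}$), but it has the minor advantage that one applies the artinian property of a single fixed module rather than comparing lengths of a sequence of different modules.
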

\begin{proof}
  Let $X$ be an object of a Hom-finite abelian category $\A$ and
  $X_n\xto{\a_n} X_{n+1} \xto{\b_n} X_{n}$ ($n\ge 0)$ a bi-chain with
  $X=X_0$. Each pair $\a_n,\b_n$ induces a monomorphism
  $\Hom_\A(X_{n+1},X_{n+1})\to \Hom_\A(X_{n},X_{n})$. If this map is
  bijective, then $\a_n$ is a monomorphism and $\b_n$ is an
  epimorphism. In an abelian category, any morphism is invertible if
  it is both a monomorphism and an epimorphism. Thus the assumption on
  $\A$ implies that $\a_n$ and $\b_n$ are invertible for large enough
  $n$.
\end{proof}

\subsection*{Fitting's lemma}
Fix an  abelian category. 
\begin{lem}[Fitting] 

\label{le:fitting}
Let $X$ be an object satisfying the bi-chain condition and $\p$ an
endomorphism.
\begin{enumerate}
\item For large enough $r$, one has  $X=\Im\p^r\oplus\Ker\p^r$.
\item If $X$ is indecomposable, then $\p$ is either invertible or
nilpotent.
\end{enumerate}
\end{lem}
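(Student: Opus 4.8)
The plan is to feed the powers of $\p$ into the bi-chain condition. First I would form the descending chain of images and set $X_n=\Im\p^{n}$, noting that $X_0=\Im\p^{0}=X$. Restricting $\p$ yields an epimorphism $\alpha_n\colon\Im\p^{n}\twoheadrightarrow\Im\p^{n+1}$, since $\Im\p^{n+1}=\p(\Im\p^{n})$ is by definition the image of the restriction of $\p$ to $\Im\p^{n}$; and the inclusion $\beta_n\colon\Im\p^{n+1}\hookrightarrow\Im\p^{n}$ is a monomorphism. Thus $X_n\xto{\alpha_n}X_{n+1}\xto{\beta_n}X_n$ is a genuine bi-chain starting at $X$, and the bi-chain condition supplies an integer $n_0$ beyond which all $\alpha_n$ and $\beta_n$ are invertible.

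The two kinds of invertibility serve two purposes. Invertibility of the inclusions $\beta_n$ for $n\ge n_0$ says the images stabilise, $\Im\p^{n}=\Im\p^{n_0}=:W$ for all $n\ge n_0$, so $\p$ restricts to an endomorphism $u\colon W\to W$ characterised by $m u=\p m$, where $m\colon W\hookrightarrow X$ denotes the inclusion; and invertibility of $\alpha_{n_0}$ says exactly that $u$ is an isomorphism, since under the identification $\Im\p^{n_0+1}=W$ one has $m\alpha_{n_0}=\p m=mu$, whence $\alpha_{n_0}=u$. I would then fix any $r\ge n_0$ and construct the projection onto $W$ along $\Ker\p^{r}$. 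Writing the canonical factorisation $\p^{r}=m\,\bar\p^{r}$ with $\bar\p^{r}\colon X\twoheadrightarrow W$, the relation $mu=\p m$ gives $\p^{r}m=mu^{r}$ and hence $\bar\p^{r}m=u^{r}$ after cancelling the monomorphism $m$. The morphism $\pi:=m\,u^{-r}\,\bar\p^{r}$ then satisfies $\pi^2=\pi$ and $\pi m=m$, so $\Im\pi=W=\Im\p^{r}$, while $\Ker\pi=\Ker\bar\p^{r}=\Ker\p^{r}$ because $m u^{-r}$ is a monomorphism. Since idempotents split in an abelian category, $X=\Im\pi\oplus\Ker\pi=\Im\p^{r}\oplus\Ker\p^{r}$, which is assertion~(1).

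For assertion~(2), suppose $X$ is indecomposable; as $X\neq 0$, exactly one summand of $X=\Im\p^{r}\oplus\Ker\p^{r}$ vanishes. If $\Im\p^{r}=0$ then $\p^{r}=0$ and $\p$ is nilpotent; if $\Ker\p^{r}=0$ then $\p^{r}$ is both a monomorphism and an epimorphism, hence an isomorphism, and since $\p$ commutes with $(\p^{r})^{-1}$ the morphism $\p^{r-1}(\p^{r})^{-1}$ is a two-sided inverse of $\p$, so $\p$ is invertible. The step I expect to require the most care is the element-free proof of~(1): replacing the classical computation ``$\Im\p^{r}\cap\Ker\p^{r}=0$ and $\Im\p^{r}+\Ker\p^{r}=X$'' by the explicit splitting idempotent $\pi$ and verifying its image and kernel, together with the observation that a single application of the bi-chain condition simultaneously delivers the stabilisation of the images and the invertibility of the induced endomorphism $u$ of the stable image $W$.
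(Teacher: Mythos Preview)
Your proof is correct and follows essentially the same approach as the paper: you form the identical bi-chain $X_n=\Im\p^n$ with $\alpha_n$ the restriction of $\p$ and $\beta_n$ the inclusion, and then build the same splitting idempotent (the paper writes it as $\psi\p^r$ with $\psi$ the inverse of $\p^r\colon\Im\p^r\to\Im\p^{2r}$, which unwinds to your $m\,u^{-r}\bar\p^r$). The only cosmetic difference is that the paper verifies the direct sum axioms $\iota_1\pi_1+\iota_2\pi_2=\id_X$ and $\pi_i\iota_i=\id$ directly, whereas you check that $\pi$ is idempotent with the stated image and kernel and then invoke splitting of idempotents in an abelian category; both packagings are equivalent.
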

\begin{proof}
  The endomorphism $\p$ yields a bi-chain $X_n\xto{\a_n} X_{n+1}
  \xto{\b_n} X_{n}$ ($n\ge 0)$ with $X_n=\Im\p^n$, $\a_n=\p$, and
  $\b_n$ the inclusion.  Because $X$ satisfies the bi-chain condition,
  we may choose $r$ large enough so that $\Im \p^r=\Im\p^{r+1}$. Thus
  $\p^r\colon\Im\p^r\to\Im\p^{2r}$ is an isomorphism and we denote by
  $\psi$ its inverse.  Furthermore, let $\iota_1\colon\Im\p^r\to X$
  and $\iota_2\colon \Ker\p^r\to X$ denote the inclusions. We put
  $\pi_1=\psi\p^r\colon X\to \Im\p^r$ and $\pi_2=\id_X-\psi\p^r\colon
  X\to \Ker\p^r$. Then $\iota_1\pi_1+\iota_2\pi_2=\id_X$ and
  $\pi_i\iota_i=\id_{X_i}$ for $i=1,2$. Thus
  $X=\Im\p^r\oplus\Ker\p^r$.  Part (2) is an immediate consequence of
  (1).
\end{proof}

\begin{prop}
\label{pr:fitting}
An object satisfying the bi-chain condition is indecomposable if and
only if its endomorphism ring is local.
\end{prop}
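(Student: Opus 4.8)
The plan is to prove the two implications separately. The direction ``local $\Rightarrow$ indecomposable'' is a general fact about additive categories that makes no use of the bi-chain condition, while the converse ``indecomposable $\Rightarrow$ local'' is where Fitting's lemma (Lemma~\ref{le:fitting}) enters. Throughout I write $\Ga=\End_\A(X)$ and use that the non-units of $\Ga$ are precisely the non-invertible endomorphisms of $X$.

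For ``indecomposable $\Rightarrow$ local'', I would start from Lemma~\ref{le:fitting}(2): since $X$ is indecomposable and satisfies the bi-chain condition, every $\p\in\Ga$ is either invertible or nilpotent, so the non-units of $\Ga$ are exactly the nilpotent endomorphisms. By the definition of a local ring it then suffices to show that a sum $\a+\b$ of two non-units is again a non-unit. The naive attempt ``a sum of nilpotents is nilpotent'' fails in a noncommutative ring, so instead I would argue by contradiction: if $\a+\b$ were invertible, set $\g=(\a+\b)^{-1}$, so that $\g\a+\g\b=\id_X$. Both $\g\a$ and $\g\b$ are non-units, since multiplying a non-unit by the unit $\g$ cannot produce a unit. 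Hence $\g\a$ is nilpotent, which forces $\g\b=\id_X-\g\a$ to be invertible, with inverse the finite sum $\sum_{i\ge 0}(\g\a)^i$. This contradicts that $\g\b$ is a non-unit, so $\a+\b$ is a non-unit and $\Ga$ is local.

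For ``local $\Rightarrow$ indecomposable'', I would use the idempotents attached to a decomposition. Suppose, for a contradiction, that $X=X_1\oplus X_2$ with $X_1,X_2\neq 0$, and let $e_i=\iota_i\pi_i\in\Ga$ be the associated orthogonal idempotents, so that $e_1+e_2=\id_X$. Each $e_i$ is a non-unit, because an invertible idempotent equals $\id_X$, which would force the complementary summand to vanish. Thus $\id_X$ is a sum of two non-units, and since $\id_X$ is a unit this contradicts locality of $\Ga$. Here one uses that a local ring is nonzero, so that $\id_X\neq 0$; equivalently, locality of $\Ga$ already guarantees $X\neq 0$. Hence $X$ is indecomposable.

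The only genuinely delicate point is the reduction in the first implication: one cannot appeal to nilpotence of $\a+\b$ directly, and the device of dividing by the hypothetical inverse $\g$ to reach the relation $\g\a+\g\b=\id_X$ is what makes the argument go through. Everything else is formal once Lemma~\ref{le:fitting} supplies the dichotomy between invertible and nilpotent endomorphisms.
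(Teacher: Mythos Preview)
Your proof is correct and matches the paper's approach essentially step for step: both directions use the same ideas (multiply by the inverse of the sum and exploit nilpotence via the geometric series for one direction, and the idempotents of a nontrivial decomposition for the other). The only cosmetic difference is that the paper phrases the first implication contrapositively (if $\p+\p'$ is invertible and $\p$ is not, then $\p'$ is) rather than by contradiction.
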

\begin{proof}
Let $X$ be an indecomposable object and $\p,\p'$ a pair of
endomorphisms.  Suppose $\p+\p'$ is invertible, say
$\r(\p+\p')=\id_X$. If $\p$ is non-invertible then $\r\p$ is
non-invertible. Thus $\r\p$ is nilpotent, say $(\r\p)^r=0$, by
Lemma~\ref{le:fitting}. We obtain
\[(\id_X-\r\p)(\id_X+\r\p+\ldots+(\r\p)^{r-1})=\id_X.\] Therefore
$\r\p'=\id_X-\r\p$ is invertible whence $\p'$ is invertible.

If $X=X_1\oplus X_2$ with $X_i\neq 0$ for $i=1,2$, then we have
idempotent endomorphisms $\e_i$ of $X$ with $\Im\e_i=X_i$. Clearly,
each $\e_i$ is non-invertible but $\id_X=\e_1+\e_2$.
\end{proof}

\subsection*{Krull-Remak-Schmidt decompositions}

Fix an abelian category.

\begin{thm}[Atiyah]\label{th:bichain}
An object  satisfying the bi-chain condition admits a decomposition
into a finite direct sum of indecomposable objects having local
endomorphism rings.
\end{thm}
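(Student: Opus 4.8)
The plan is to prove Atiyah's theorem (Theorem~\ref{th:bichain}) by induction on a suitable measure of complexity, using Fitting's lemma (Lemma~\ref{le:fitting}) and Proposition~\ref{pr:fitting} as the main engines. The key observation is that Proposition~\ref{pr:fitting} already tells us that an \emph{indecomposable} object satisfying the bi-chain condition has a local endomorphism ring; so the entire content of the theorem is that a decomposition into finitely many indecomposables \emph{exists}. Thus it suffices to show that $X$ decomposes as a finite direct sum of indecomposable objects, and each indecomposable summand will automatically satisfy the bi-chain condition (being a direct summand of $X$) and hence have a local endomorphism ring.

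First I would argue that the bi-chain condition is inherited by direct summands: if $X = X' \oplus X''$ and $X$ satisfies the bi-chain condition, then so does $X'$. Indeed, given a bi-chain based at $X'$, one can pad it with the identity on $X''$ to produce a bi-chain based at $X$; the stabilisation of the padded chain forces stabilisation of the original. This closure property is what allows the induction to proceed. The plan is then to run the following recursive argument: either $X$ is indecomposable, in which case we are done with a one-term decomposition, or $X = X' \oplus X''$ with both summands nonzero, and we apply the procedure separately to $X'$ and $X''$.

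The main obstacle is ensuring that this recursion \emph{terminates}, since a priori it might split forever. Here is where the bi-chain condition does its essential work. The plan is to suppose, for contradiction, that $X$ admits no finite decomposition into indecomposables. Then $X$ is decomposable, say $X = X^{(1)} \oplus Y_1$ with both nonzero, and at least one summand (say $Y_1$) again fails to admit a finite indecomposable decomposition; repeating, we obtain an infinite sequence of nontrivial splittings $Y_n = X^{(n+1)} \oplus Y_{n+1}$, giving an infinite chain of proper direct summands together with the corresponding idempotents $\e_n \in \End_\A(X)$. I would then encode this infinite splitting as a bi-chain: the nested images $\Im(\e_1 \e_2 \cdots \e_n)$ give a strictly descending chain of subobjects (each $X^{(i)}$ is split off and discarded), which can be packaged into a bi-chain based at $X$ whose connecting maps are never eventually invertible, contradicting the bi-chain condition. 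The delicate point will be to set up this bi-chain precisely so that the failure of termination translates into genuine non-stabilisation of both the epimorphisms and monomorphisms in the bi-chain; getting the inclusions and projections to line up correctly is the one place that demands care rather than routine verification.

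Once termination is established, the conclusion is immediate: $X$ is a finite direct sum of indecomposable summands, and by the closure property above each such summand satisfies the bi-chain condition, so Proposition~\ref{pr:fitting} shows each has a local endomorphism ring. This completes the proof.
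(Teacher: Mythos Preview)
Your proposal is correct and follows essentially the same route as the paper's proof: assume no finite indecomposable decomposition exists, recursively split off a nonzero summand while the remaining piece still fails to decompose, and package the resulting infinite chain of proper direct summands as a non-terminating bi-chain $Y_n \twoheadrightarrow Y_{n+1} \hookrightarrow Y_n$ based at $X$; then invoke Proposition~\ref{pr:fitting} for the local endomorphism rings once the summands are known to inherit the bi-chain condition. The only cosmetic difference is that the paper builds the bi-chain directly from the split projections and inclusions (noting $\a_n\b_n=\id_{X_{n+1}}$), whereas you phrase it via the idempotents $\e_n$ and their images---but these are the same objects, and the ``delicate point'' you flag is in fact immediate, since each complement $X^{(n+1)}\neq 0$ forces both the projection and the inclusion at stage $n$ to be non-invertible.
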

\begin{proof}
  Fix an object $X$ satisfying the bi-chain condition. Assume that $X$
  has no decomposition into a finite direct sum of indecomposable
  objects. Then there is a decomposition $X=X_1\oplus Y_1$ such that
  $X_1$ has no decomposition into a finite direct sum of
  indecomposable objects and $Y_1\neq 0$. We continue decomposing
  $X_1$ and obtain a bi-chain $X_n\xto{\a_n} X_{n+1} \xto{\b_n} X_{n}$
  ($n\ge 0$) with $X=X_0$ and $\a_n\b_n=\id_{X_{n+1}}$ for all $n\ge
  0$. This bi-chain does not terminate and this is a contradiction.

  It remains to observe that any direct summand of $X$ satisfies the
  bi-chain condition. In particular, every indecomposable direct
  summand has a local endomorphism ring by
  Proposition~\ref{pr:fitting}.
\end{proof}

\subsection*{Acknowldgements}
I wish to thank Andrew Hubery for various helpful comments.  Also, I
am grateful to Geordie Williamson for encouraging me to publish these
notes.

\end{document}